\documentclass[a4paper,10pt]{article}
\usepackage{textcomp}
\usepackage{graphicx}
\usepackage{color}
\usepackage[colorlinks]{hyperref}
\usepackage{lscape}
\usepackage[centertags]{amsmath}
\usepackage{amssymb}
\usepackage{amsthm}
\usepackage{newlfont}
\usepackage{amsfonts}
\theoremstyle{plain}
\newtheorem{thm}{Theorem}
\newtheorem{cor}{Corollary}
\newtheorem{lem}{Lemma}
\newtheorem{prop}{Proposition}
\newtheorem{conjec}{Conjecture}
\newtheorem{defn}{Definition}

\newtheorem{prob}{Problem}

\newcommand{\ra}{\rightarrow}

\newcommand{\sub}{\subseteq}
\newcommand{\mo}{$L=v_1v_2...v_n$ }
\newcommand{\nd}{$v\in N^{++}_D(f)$ }
\newcommand{\nnd}{$v\in N^{++}_{D'}(f)$}
\newcommand{\maxp}{There is a maximal directed path $P=m_0y_0\ra ..\ra m_iy_i\ra ...\ra m_ky_k$ in $\Delta$ such that }
\newcommand{\same}{The same order $L$ is a local median order of the obtained tournament $T'$, $f$ is a feed vertex of $L$ and $f$ has the SNP in $T'$}
\newcommand{\lose}{$rs\ra uv $ in $\Delta$, namely $s\ra v$ and $ u\notin N^{++}_D(s)$}

\begin{document}

\begin{center}
\Large \textbf{The Second Neighborhood Conjecture for Oriented Graphs Missing Combs}
\end{center}
\begin{center}
Salman GHAZAL\footnote{\noindent Department of Mathematics, Faculty of Sciences I, Lebanese University, Hadath, Beirut, Lebanon.\\
                       E-mail: salmanghazal@hotmail.com\\

                       }
\end{center}
\vskip1cm
\begin{abstract}
 Seymour's Second Neighborhood Conjecture asserts that every oriented graph has a vertex whose first out-neighborhood is at most as large as its second out-neighborhood. Combs are the graphs having no induced $C_4$, $\overline{C_4}$, $C_5$, chair or $\overline{chair}$.  We characterize combs using dependency digraphs. We characterize the graphs having no induced $C_4$, $\overline{C_4}$, chair or $\overline{chair}$ using dependency digraphs. Then we prove that every oriented graph missing a comb satisfies this conjecture. We then deduce that every oriented comb and every oriented threshold graph satisfies Seymour's conjecture.
\end{abstract}

\begin{section}{Introduction}

\par \hskip0.6cm In this paper, graphs are finite and simple. The vertex set and edge set of a graph $G$ are  denoted by $V(G)$ and $E(G)$ respectively. Two edges of a graph $G$ are said to be adjacent if they have a common endpoint and two vertices $x$ and $y$ are said to be adjacent if $xy$ is an edge of $G$. The \emph{neighborhood} of a vertex $v$ in a graph $G$, denoted by $N_G(v)$, is the set of all vertices adjacent to $v$ and its \emph{degree} is $d_G(v)=|N_G(v)|$. We omit the subscript if the graph is clear from the context. For two set of vertices $U$ and $W$ of a graph $G$, let $E[U, W]$ denote the set of all edges in the graph $G$ that joins a vertex in $U$ to a vertex in $W$. A graph is empty if it has no edges. For $A\sub V(G)$, $G[A]$ denotes the sub-graph  of $G$ induced by $A$. If $G[A]$ is an empty graph, then $A$ is called a stable. While, if $G[A]$ is a complete graph, then $A$ is called a clique set, that is any two distinct vertices in $A$ are adjacent. The complement graph of $G$ is denoted by $\overline{G}$ and defined as follows: $V(G)=V(\overline{G})$ and $xy\in E(\overline{G})$ if and only if $xy\notin E(G)$. \\

\par Directed graphs (digraphs) contains neither loops nor parallel arcs and oriented graphs are orientations of graphs so they are digraphs without digons (directed cycles of length 2). The vertex set and arc set of a digraph $D$ are  denoted by $V(D)$ and $E(D)$ respectively.
Let $D$ denote a digraph and $x, y\in V(D)$. If $(x,y)\in E(D)$, then $y$ is an out-neighbor of $x$, $x$ is an in-neighbor of $y$ and $x$ and $y$ are adjacent. For $v\in V(D)$,  $N^{+}_D(v)$ (resp. $N^{-}_D(v)$)
denotes the (first) out-neighborhood (resp. in-neighborhood), which is the set of all out-neighbors (resp. out-neighbors) of $v$. Whereas, $N^{++}_D(v)$ denotes the\emph{ second out-neighborhood} of $v$, which is the set of vertices that are at distance 2 from $v$, that is, the set of the out-neighbors of the out-neighbors of $v$, excluding the set $N^{+}_D(v)$. The out-degree, in-degree and the second out-degree of $v$ are the following numbers
$d^{+}_D(v):=|N^{+}_D(v)|$, $d^{-}_D(v)=|N^{-}_D(v)|$ and $d^{++}_D(v)=|N^{++}_D(v)|$, respectively.
The minimum out-degree (resp. in-degree) of $D$ is the minimal out-degree (resp. in-degree) of a vertex in $D$. We omit the subscript if the digraph is clear from the context. For short, we write $x\rightarrow y$ if the arc $(x,y)\in E(D)$. Also, we write $x_1\ra x_2\ra ...\ra x_n$, if $x_i\ra x_{i+1}$ for every $0<i<n$. A digraph is empty if it has no arcs. For $A\sub V(D)$, $D[A]$ denotes the sub-digraph of $D$  induced by $A$. \\

We say that $v$ has the \emph{second neighborhood property} (SNP) if $d^{+}(v)\leq d^{++}(v)$. In 1990, P. Seymour conjectured the following statement:

\begin{conjec}
 \textbf{( The Second Neighborhood Conjecture (SNC) \cite{dean})}\\ Every oriented graph has a vertex with the SNP.
\end{conjec}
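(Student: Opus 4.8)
The plan is to approach the conjecture through \emph{median orders}, the technique that has settled every case known so far and on which the present paper builds. A median order of a digraph $D$ on $\{v_1,\dots,v_n\}$ is an enumeration maximizing the number of arcs $v_iv_j$ with $i<j$ (the \emph{forward} arcs), and its last vertex is a \emph{feed vertex}. Havet and Thomassé proved that in a \emph{tournament} every feed vertex has the SNP (a fact first obtained, by a different analytic route, by Fisher); the engine is a purely local inequality: for every $i$, inside the interval $\{v_i,\dots,v_n\}$ the vertex $v_i$ dominates at least as many vertices as dominate it, since otherwise moving $v_i$ to the end would gain forward arcs. The overall aim is to propagate this tournament argument to an arbitrary oriented graph $D$ by controlling how the ``missing'' non-adjacencies of $D$ interfere with the count.

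Concretely, I would fix a completion of $D$ to a tournament $T$, take a local median order $L$ of $T$, and let $f$ be a feed vertex, so that $f$ has the SNP \emph{in $T$}. Since $D\subseteq T$ we have $d^+_D(f)\le d^+_T(f)$, and the quantity to absorb is the loss $d^+_T(f)-d^+_D(f)$ against the loss $d^{++}_T(f)-d^{++}_D(f)$: the arcs placed on the missing graph $M=\overline{U(D)}$ (the complement of the underlying graph $U(D)$ of $D$) may have been exactly the ones certifying some second out-neighbor of $f$. The second step is to record, edge by edge on $M$, which missing adjacency is ``responsible'' for which second out-neighbor --- that is, to build the \emph{dependency digraph} $\Delta$ on $E(M)$ as the paper does for combs --- and then to show that $T$ and $L$ can always be chosen so that each vertex counted in $N^{++}_T(f)\setminus N^{++}_D(f)$ is matched to a distinct vertex of $N^+_D(f)\setminus N^+_T(f)$. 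This matching should be produced by a flow argument along the maximal directed paths of $\Delta$, generalizing the bookkeeping carried out here in the missing-comb case.

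The third step is a reduction on the structure of $M$: the conjecture is already known when $M$ is a star, a matching, a forest, a generalized star, a graph of independence number at most two, or (in this paper) a comb, so it would suffice to pass from a general $M$ to these tractable families by induction --- delete a well-chosen vertex or edge of $M$ (equivalently, add adjacencies to $D$), locate a feed-type vertex of the smaller instance by the inductive hypothesis, and argue that reinserting the deleted structure either preserves that vertex's SNP, or produces a vertex of out-degree $0$ (which has the SNP trivially), or leaves a vertex of small out-degree whose second neighborhood can be estimated directly. A parallel route is the weighting method of Chen, Shen and Yuster: replace the feed vertex by a probability distribution on $V(D)$ adapted to $M$ and show $\mathbb{E}\!\left[\,d^{++}-d^{+}\,\right]\ge 0$; median orders supply the uniform-flavored distribution, and the hope is that a distribution shaped by $\Delta$ closes the residual gap.

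The main obstacle --- and the reason the conjecture is still open --- is precisely this last reduction. Each partial result succeeds because forbidding one of $C_4$, $\overline{C_4}$, $C_5$, chair or $\overline{\text{chair}}$ (and their relatives) forces $\Delta$ to be acyclic, a forest, or otherwise bounded, so that the compensation of the second step goes through globally; once $M$ is allowed to contain all of these configurations at once, $\Delta$ can have arbitrarily entangled cycles and no global invariant controlling $d^{++}_T(f)-d^{++}_D(f)$ is known. Closing the gap for every $M$ would require either a substitute for $\Delta$ that stays manageable in full generality, or an entirely different global argument; absent that, the honest reach of this method --- and of the present paper --- is the class of oriented graphs missing a comb, from which the stated results on oriented combs and oriented threshold graphs follow as corollaries.
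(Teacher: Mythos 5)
The statement you were asked to prove is Seymour's Second Neighborhood Conjecture itself, which is an open problem; the paper does not prove it and does not claim to --- it only establishes the special case of oriented graphs missing a comb (and, as corollaries, oriented combs and oriented threshold graphs). Your text is therefore correctly diagnosed by its own last paragraph: it is a research programme, not a proof. The first step (complete $D$ to a tournament $T$, take a local median order, use Havet--Thomass\'e to get a feed vertex $f$ with the SNP in $T$) is sound and is exactly the paper's starting point. But the second step --- choosing $T$ and $L$ so that every vertex of $N^{++}_T(f)\setminus N^{++}_D(f)$ is matched injectively into $N^+_T(f)\setminus N^+_D(f)$ via a ``flow argument along the maximal directed paths of $\Delta$'' --- is precisely the step that only works when the dependency digraph $\Delta$ is structurally constrained. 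The paper's entire Section 3 is devoted to showing that forbidding $C_4$, $\overline{C_4}$, $C_5$, chair and co-chair in the missing graph forces $\Delta$ to consist of disjoint directed paths, and Section 4 then exploits that path structure case by case. For a general missing graph $\Delta$ can contain digons and arbitrarily entangled cycles, and no choice of convenient orientations along it is known to exist; your proposal does not supply one. The third step (induct by deleting vertices or edges of the missing graph and reinsert) also has no mechanism for controlling how reinsertion destroys the SNP of the previously found vertex.

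In short: there is no gap to point to in the paper, because the paper never attempts this proof; and there is no proof in your proposal, because steps two and three are statements of what would need to be true rather than arguments that they are. What you have written is an accurate survey of the method's reach and of why it stalls, and it matches the paper's actual contribution (the missing-comb case) where it is concrete. If the intended target was the theorem the paper does prove --- every oriented graph missing a comb satisfies SNC --- then your outline is the right skeleton, but you would still need the two structural inputs the paper establishes: that $\Delta$ is a disjoint union of directed paths confined to the sets $E[Y_j,M_j]$, and the explicit reorientation of missing edges at the feed vertex followed by the case analysis showing no new second out-neighbor is gained beyond what is compensated in the first out-neighborhood.
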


SNC, if true, will establish a weakening of an important special case of the Caccetta-Haggkvist conjecture, proposed in 1978:\\

\begin{conjec}
 \textbf{ (\cite{CH})}\\ Every digraph $D$ with minimum out-degree at least $|V(D)|/k$, has a directed cycle of length at most $k$.
\end{conjec}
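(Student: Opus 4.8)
Since the statement in question is the Caccetta--H\"aggkvist conjecture, open since 1978, what follows is a plan of attack rather than a complete route: I describe the natural reductions, treat in more detail the case closest to the theme of this paper, and flag exactly where the argument stalls. The first step is to normalize the hypothesis to its tight form. Given a digraph $D$ on $n$ vertices with minimum out-degree at least $n/k$, pass to a spanning subdigraph in which every out-degree is exactly $\lceil n/k\rceil$ by deleting out-arcs at vertices of excess out-degree, and, if necessary, adjoin a bounded number of vertices so that $k\mid n$ without lowering the degree ratio. Deleting arcs can destroy the very short cycle we seek, so this step must be carried out greedily while monitoring the directed girth, or justified by a counting argument showing that a cycle of length $\le k$ survives. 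In contrapositive form the goal becomes the clean statement that no out-$d$-regular digraph on exactly $dk$ vertices has directed girth $>k$, equivalently that every digraph of girth $>k$ has a vertex of out-degree $<n/k$.

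The case $k=3$ deserves a separate, local treatment, both because it is the hardest known special case and because it is the one linked to the Second Neighborhood Conjecture of this paper --- a vertex with the SNP in a digraph of large minimum out-degree being a natural anchor. Here one fixes a vertex $v$ of minimum out-degree and double-counts arcs within $A:=N^+(v)$, arcs from $N^{++}(v)$ into $A$, and arcs from $A$ towards $v$: a directed triangle through $v$ is absent only if $A$ is stable and receives no arc from $N^{++}(v)$, which forces every vertex of $A$ to send all its out-arcs into $N^{++}(v)\setminus(A\cup N^-(v))$; iterating along a breadth-first layering then exhibits a vertex of out-degree strictly below $n/3$ once the layer sizes are summed. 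I expect this to yield a bound of the form $n/3+c$ for an absolute constant $c$ (in line with the known fractional relaxations), and the sharp value $c=0$ is precisely where it breaks.

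For general $k$ the plan is induction on $k$ supported by a spectral estimate. One would like to delete a set $S$ of roughly $n/k$ vertices so that $D-S$, on $n'=n-|S|$ vertices, still has minimum out-degree at least $n'/(k-1)$, and then invoke the case $k-1$ to produce a directed cycle of length $\le k-1<k$; the obstruction is that $S$ may be concentrated inside a single out-neighborhood, so $S$ must be chosen --- as a random subset, or as an iterated out-neighborhood --- so that no surviving vertex loses too many out-neighbors, and this is exactly where the eigenvalue of the adjacency matrix of a high-girth digraph, or a flag-algebra / entropy-compression estimate, has to be injected to bound the loss. The main obstacle, on every route, is the sharpness of $n/k$: blow-ups of the directed $k$-cycle make it exactly tight, so any argument that loses even a lower-order additive term cannot finish, and ruling out an out-$d$-regular digraph of girth $k+1$ on exactly $dk$ vertices is the open core of the problem --- which is why even the triangle case remains unresolved. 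Realistically deliverable here are the normalization step, the local argument for $k=3$ up to an additive constant, and the inductive scaffolding for general $k$; the sharp-constant step must be left as the crux.
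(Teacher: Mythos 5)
There is nothing in the paper to compare your attempt against: the statement you were given is Conjecture~2, the Caccetta--H\"aggkvist conjecture of 1978, which the paper explicitly records as an open problem (``This is still open problem'') and makes no attempt to prove. The paper only cites it to motivate SNC, via the observation that SNC would imply a weakening of the case $k=3$ in which both the minimum in-degree and the minimum out-degree are at least $|V(D)|/3$. Your proposal is candid that it does not constitute a proof, and indeed it does not: every route you describe stalls exactly at the sharp constant $n/k$, which is the entire content of the conjecture. So the ``genuine gap'' here is the whole argument, and no amount of normalization or inductive scaffolding closes it.

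Two technical remarks on the sketch itself. First, the worry in your normalization step is backwards: deleting excess out-arcs to make every out-degree exactly $\lceil n/k\rceil$ cannot destroy the cycle you seek, because any short directed cycle found in the spanning subdigraph is already a cycle of $D$; the reduction to the out-regular case is harmless and standard. Second, the local analysis for $k=3$ is not correct as stated: the absence of a directed triangle through $v$ does \emph{not} force $A=N^+(v)$ to be stable. An arc $a_1\ra a_2$ inside $A$ creates a triangle through $v$ only if additionally $a_2\ra v$, and (in the girth-$>3$ setting, where digons are already excluded) $A\cap N^-(v)=\emptyset$, so arcs inside $A$ are perfectly compatible with triangle-freeness at $v$. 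What triangle-freeness actually gives is $N^+(a)\cap N^-(v)=\emptyset$ for every $a\in A$, and the known partial results (Shen's $d\geq 0.3542n$ bound and its successors) extract much more delicate information than a breadth-first layering; your claimed bound of the form $n/3+c$ does not follow from the counting you describe. If you want a statement of this flavor that is actually provable and relevant to the paper, prove the implication SNC $\Rightarrow$ (the in- and out-degree weakening of the $k=3$ case), which is a short exercise, rather than attacking the conjecture itself.
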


The weakening requires both, the minimum in-degree and the minimum out-degree are at least $|V(D)|/k$  and the particular case is $k=3$. This is still open problem.\\

 In 1996, Fisher \cite{fisher}
proved the SNC for tournaments (orientations of complete graph). Another short proof of Dean's conjecture was given by
Havet and Thomass\'{e} \cite{m.o.}, in 2000, using a tool called (local) median order. In 2007, Fidler and Yuster \cite{fidler} used also median orders to
prove SNC for tournaments missing a matching, using local median orders and dependency digraphs. In 2012, Ghazal proved the weighted version of SNC for tournaments missing a generalized star \cite{a} and in 2013, he proved the SNC for tournaments minus cycle of length 4 or 5. In 2015, Ghazal \cite{ghazal3} refined the result of \cite{fidler} to show that for every tournament missing a matching, there is a certain "feed vertex" having the SNP. \\
\end{section}

\begin{section}{Preliminary definitions and theorems}

\hskip0.6cm A cycle on $n$ vertices is denoted by $C_n=v_1v_2...v_nv_1$ while a path on $n$ vertices is denoted by $P_n=v_1v_2...v_n$. A chair is any graph on 5  distinct vertices $x,y,z,t,v$ with exactly 5 edges $xy,yz,zt$ and $zv$. The co-chair or $\overline{chair}$ is the complement of a chair (see the below figure).\\

A graph $H$ is called forbidden subgraph of a graph $G$ if $H$ is not (isomorphic to) an induced subgraph of $G$.

\begin{center}
\includegraphics[width=8cm, height=7cm]{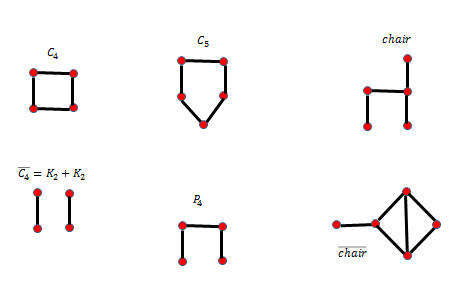}
\end{center}

\begin{defn}
\hskip0.6cm A graph $G$ is a called a split graph if its vertex set is the disjoint union of a stable set $S$ and a clique set $K$. In this case, $G$ is called an $\{S$, $K\}$-split graph.
\end{defn}

If $G$ is an $\{S$, $K\}$-split graph and $\forall s\in S$, $\forall x\in K$ we have $sx\in E(G)$, then $G$ is called a complete split graph.\\

If $G$ is an $\{S$, $K\}$-split graph and $E[S,K]$ forms a perfect matching of $G$, then $G$ is called a perfect split graph.

\begin{defn} (\cite{thresholdch}, \cite{threshold})
A threshold graph $G$ can be defined as follows:
\begin{description}
\item[1) ] $V(G)=\displaystyle\bigcup_{i=1}^{n+1}(X_i\cup A_{i-1})$, where the $A_i$'s and $X_i$'s are pair-wisely disjoint sets.
\item[2) ] $K:=\displaystyle\bigcup_{i=1}^{n+1}X_i$ is a clique and the $X_i$'s are nonempty, except possibly $X_{n+1}$.
\item[3) ] $S:=\displaystyle\bigcup_{i=0}^{n}A_{i}$ is a stable set and the $A_i$'s are nonempty, except possibly $A_0$.
\item[4) ] $\forall 1\leq j\leq i\leq n$, $G[A_i\cup X_j]$ is a complete split graph.
\item[5)] The only edges of $G$ are the edges of the subgraphs mentioned above.
\end{description}

In this case, $G$ is called an $\{S,$ $K\}$-threshold graph.

\end{defn}

\begin{thm}(Hammer and Chv\`{a}tal \cite{thresholdch}, \cite{threshold})
$G$ is a threshold graph if and only if $C_4$, $\overline{C}_4$ and $P_4$ are forbidden subgraphs of $G$.
\end{thm}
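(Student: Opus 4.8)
Since the final statement is a biconditional, the plan is to prove the two directions separately, using split graphs as the bridge. For the forward direction I will show that a threshold graph is a split graph, which already forbids $C_4$ and $\overline{C_4}$, and then rule out $P_4$ directly from the layered description. For the backward direction I will first recover a split partition, then observe that $P_4$-freeness makes the bipartition between the stable set and the clique a chain graph, and finally read the layers $X_i$, $A_i$ off that chain.

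For the forward direction, let $G$ be an $\{S,K\}$-threshold graph. As $V(G)=S\cup K$ with $S$ stable and $K$ a clique, $G$ is split; and since neither $C_4$ nor $\overline{C_4}$ admits a partition of its vertex set into a clique and a stable set (in each of them every clique and every stable set has at most two vertices, and the two complementary pairs are of the wrong type), both are forbidden in $G$. To forbid $P_4$, assume $G[\{a,b,c,d\}]$ is an induced $P_4$ with edges $ab,bc,cd$. A $P_4$ has no clique of size $3$ and no stable set of size $3$, so at most two of $a,b,c,d$ lie in $K$ and at most two in $S$; since they partition the four vertices, exactly two lie in each. The two in $K$ are adjacent, hence form one of the edges $ab$, $bc$, $cd$, while the two in $S$ are the remaining two and must be non-adjacent; the only consistent choice is $\{b,c\}\subseteq K$ and $\{a,d\}\subseteq S$. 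Write $a\in A_i$, $d\in A_j$, $b\in X_p$, $c\in X_q$. By the fourth and fifth conditions in the definition of a threshold graph, a vertex of $A_m$ is adjacent to a vertex of $X_\ell$ exactly when $\ell\le m$; hence $ab\in E(G)$ gives $p\le i$, $ac\notin E(G)$ gives $q>i$, $cd\in E(G)$ gives $q\le j$, and $bd\notin E(G)$ gives $p>j$, so $p\le i<q\le j<p$, a contradiction.

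For the backward direction, assume $G$ has no induced $C_4$, $\overline{C_4}$ or $P_4$. Since $C_5$ contains an induced $P_4$ (on four consecutive vertices), $G$ also has no induced $C_5$, and hence $G$ is a split graph (a graph is split if and only if it has no induced $C_4$, $\overline{C_4}$ or $C_5$); fix a partition $V(G)=S\cup K$ with $S$ stable and $K$ a clique. The crucial step is that the bipartite graph with parts $S$, $K$ and edge set $E[S,K]$ has no induced $2K_2$: if $s,s'\in S$ and $k,k'\in K$ satisfied $sk,s'k'\in E(G)$ and $sk',s'k\notin E(G)$, then, using $ss'\notin E(G)$ and $kk'\in E(G)$, the set $\{s,k,k',s'\}$ would induce a $P_4$. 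Thus this bipartite graph is a chain graph: the vertices of $S$ can be linearly ordered $s_1,\dots,s_p$ with $N(s_1)\cap K\supseteq\cdots\supseteq N(s_p)\cap K$, and then each $N(k)\cap S$ with $k\in K$ is an initial segment of this order.

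It then remains to package this staircase as the layers of the definition: partition $S$ into the classes $A_0,A_1,\dots,A_n$ of vertices having equal neighborhood in $K$, indexed so that $N(s)\cap K$ grows with the index, and partition $K$ into the classes $X_1,\dots,X_{n+1}$ of vertices having equal neighborhood in $S$, indexed so that $N(k)\cap S$ shrinks with the index; the chain structure then forces $A_i$ and $X_j$ to be completely joined when $j\le i$ and non-adjacent otherwise, which is exactly the five conditions defining a threshold graph. I expect this last step to be the main obstacle: one must check that the two independently defined orderings can be laid out on a common index scale (so that the numbers of $A$-classes and of $X$-classes match up as $n+1$), that the complete/empty alternation between consecutive layers comes out precisely as stated, and that the degenerate cases (an empty $A_0$ or an empty $X_{n+1}$) agree with the definition; once the split partition and the chain-graph property are available, the remaining verifications are routine. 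Alternatively the backward direction can be obtained by induction, by showing that every nonempty $\{C_4,\overline{C_4},P_4\}$-free graph has an isolated or a dominating vertex and removing it.
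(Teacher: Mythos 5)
The paper does not prove this statement at all: it is quoted as a known result of Hammer and Chv\`{a}tal, so there is no internal proof to compare against, and what you have written is a self-contained proof along the standard route (split graphs plus nested neighborhoods), which is correct. Your forward direction is fine: the adjacency rule ``a vertex of $A_m$ meets $X_\ell$ exactly when $\ell\le m$'' does follow from conditions 4) and 5) of Definition 2 and covers the degenerate classes $A_0$ and $X_{n+1}$, and the cyclic chain $p\le i<q\le j<p$ correctly excludes $P_4$, while the split partition excludes $C_4$ and $\overline{C_4}$. In the backward direction two remarks are worth making. First, you invoke the F\"{o}ldes--Hammer characterization of split graphs (forbidden $C_4$, $\overline{C_4}$, $C_5$) as a black box; that is legitimate, but note it is an imported theorem of comparable weight to the one being proved, so a fully self-contained write-up would either prove it or use the alternative you mention (induction via an isolated or dominating vertex, which avoids split graphs entirely). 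Second, the ``packaging'' step you defer does go through exactly as you predict: since the sets $N(s)\cap K$ form a chain, the successive differences $N_j\setminus N_{j-1}$ are precisely the classes of $K$ under equal neighborhood in $S$, so the $A$-classes and $X$-classes interleave with the same index $n$, the sets $X_1,\dots,X_n$ and $A_1,\dots,A_n$ are automatically nonempty, and only $A_0$ (vertices of $S$ with empty neighborhood) and $X_{n+1}$ (vertices of $K$ with no neighbor in $S$) can be empty, which is exactly Definition 2. So the proposal is correct; it simply supplies a proof where the paper only cites one.
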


\begin{defn}(\cite{combch}) \label{gcdef}
A graph $G$ is called a comb if:
\begin{description}
  \item[1)] $V(G)$ is disjoint union of sets $A_0,...,A_n, M_1,...,M_l,X_1,....,X_{n+1}, Y_2,...,Y_{l+2}$. Let $Y_1=X_1$ (These sets are called the sets of the comb $G$).
  \item[2)] $S:=A\cup M$ is a stable set, where $M=\displaystyle\bigcup_{i=1}^{l}M_{i}$ and $A=\displaystyle\bigcup_{i=0}^{n}A_{i}$
  \item[3)] $K:=X\cup Y$ is a clique, where $X=\displaystyle\bigcup_{i=1}^{n+1}X_{i}$ and $Y=\displaystyle\bigcup_{i=1}^{l+2}Y_{i}$.
  \item[4)] $\forall 1\leq j\leq i\leq n$, $G[A_i\cup X_j]$ is a complete split graph.
  \item[5)]$G[A\cup Y]$ is a complete split graph.
  \item[6)]$\forall 1\leq i\leq l$, $G[Y_i\cup M_i]$ is a perfect split graph.
  \item[7)] $\forall 1\leq i <j \leq l$, $G[Y_j\cup M_i]$ is a complete split graph.
  \item[8)] $\exists 1\leq k_0\leq l$, $\forall i\leq k_0$, $G[Y_{l+1}\cup M_i]$ is a complete split graph.
  \item[9)] $X_{n+1}, Y_{l+2}, Y_{l+1}, M_l$ and $A_0$ are the only possibly empty sets.
  \item[10)] The only edges of $G$ are the edges of the subgraphs mentioned above.
\end{description}

In this case, we say that $G$ is an $\{S,$ $K\}$-comb.
\end{defn}

\begin{thm}\cite{combch}
$G$ is a comb if and only if $C_4$, $\overline{C}_4$, $C_5$, chair and co-chair are forbidden subgraphs of $G$.
\end{thm}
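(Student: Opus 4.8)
The plan is to prove both implications; almost all the content lies in the ``if'' direction, so I sketch it carefully.

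\emph{The ``only if'' direction.} Suppose $G$ is an $\{S,K\}$-comb. Since $V(G)=S\cup K$ with $S$ stable and $K$ a clique, $G$ is a split graph, and no split graph has an induced $C_4$, $\overline{C}_4$ or $C_5$: each would have to meet $S$ in an independent set and $K$ in a clique, which is impossible for $C_5$ (independence and clique number both $2$, and $2+2<5$), and for $C_4$, $\overline{C}_4$ because the two vertices forced into $K$ would then have to form a non-edge. Hence only the chair and co-chair need to be excluded. From their independence and clique numbers, a chair in a split graph uses exactly $3$ vertices of $S$ and $2$ of $K$, a co-chair exactly $2$ of $S$ and $3$ of $K$, and in both cases the placement is essentially forced. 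I would then record, from Definition \ref{gcdef}, the trace on $K$ of the neighborhood of a stable vertex --- it is $X_1\cup\cdots\cup X_i\cup Y$ for a vertex of $A_i$, and $\{m^{\ast}\}\cup Y_{i+1}\cup\cdots\cup Y_l$ (together with $Y_{l+1}$ when $i\le k_0$) for a vertex of $M_i$ with matching partner $m^{\ast}\in Y_i$ --- and run a finite, somewhat lengthy case analysis on which sets of the comb the five chosen vertices lie in, contradicting the presence of a chair or co-chair each time.

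\emph{The ``if'' direction.} Assume $C_4,\overline{C}_4,C_5$, chair and co-chair are forbidden in $G$. First, $\{C_4,\overline{C}_4,C_5\}$-freeness makes $G$ a split graph, by the classical characterization of split graphs; fix a split partition $(S,K)$. Next I would translate the remaining two forbidden subgraphs into neighborhood dichotomies: (i) for all $k,k'\in K$, the sets $N(k)\cap S$ and $N(k')\cap S$ are either comparable under inclusion or satisfy $|(N(k)\setminus N(k'))\cap S|=|(N(k')\setminus N(k))\cap S|=1$, since otherwise a stable vertex adjacent to $k$ but not $k'$, with two stable vertices adjacent to $k'$ but not $k$, would induce a chair; and (ii) for all $s,s'\in S$, $N(s)$ and $N(s')$ are either comparable or satisfy $|N(s)\setminus N(s')|=|N(s')\setminus N(s)|=1$, since otherwise a stable vertex seeing two clique vertices both missed by another stable vertex, the latter seeing a third clique vertex missed by the former, would induce a co-chair. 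So on each of $S$ and $K$ the inclusion order of the relevant neighborhoods is \emph{near-linear}: incomparable pairs are ``crossing pairs'', each differing by exactly one vertex on each side.

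From (i) and (ii) I would rebuild the comb. Call two stable vertices \emph{crossing} if their neighborhoods are incomparable. The key step is to show --- by applying chair- and co-chair-freeness again to carefully chosen $5$-tuples --- that the crossing relation on $S$ is an equivalence whose classes are cliques $M_1,\dots,M_l$, each matched \emph{bijectively} with an equal-size set $Y_i\subseteq K$ via the perfect matching $E[Y_i,M_i]$, these matchings realizing the private one-element differences of (ii). The non-crossing stable vertices, ordered by inclusion of neighborhoods, give $A_0,\dots,A_n$; the clique vertices whose $S$-neighborhood is a suffix $A_j\cup\cdots\cup A_n$ give the $X_j$, those adjacent to all of $A$ give the $Y_i$, and the order of the blocks and the threshold $k_0$ are read off from the staircase pattern that (i) forces on the adjacencies between the $Y_i$ and the $M_j$. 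It then remains to check that these sets satisfy conditions (1)--(10) of Definition \ref{gcdef}.

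The hard part will be this middle step. Translating the two $5$-vertex obstructions into (i) and (ii), and the final verification against Definition \ref{gcdef}, are routine; but in an arbitrary split graph the ``barely crossing'' pairs could a priori form a tangled bipartite configuration, and ruling this out --- proving the crossing relation transitive, that each clique vertex has a single private stable partner relative to its whole crossing block, and that crossing blocks interleave consistently with the non-crossing linear skeleton --- is where one must repeatedly produce a chair or a co-chair from a cleverly chosen triple of clique vertices together with a pair of stable vertices (or vice versa). Pinning down exactly which such configurations are available is, I expect, the crux of the proof.
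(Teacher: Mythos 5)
You have not proved the theorem; what you have is a correct reduction plus an honest placeholder where the main argument should be. Note also that this paper does not prove the statement itself --- it is quoted from \cite{combch} --- so there is no internal proof to compare against; judged on its own merits, your proposal is sound in its preliminary steps but stops before the actual content. The parts you carry out are fine: a comb is a split graph; split graphs exclude $C_4$, $\overline{C}_4$, $C_5$; a chair (resp.\ co-chair) inside a split graph is forced to meet $S$ in three (resp.\ two) vertices and $K$ in two (resp.\ three) in essentially one way; and with that forced placement, chair-freeness and co-chair-freeness of a split graph are exactly your conditions (i) and (ii) (your five-vertex configurations do induce a chair and a co-chair, respectively). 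So the theorem is equivalent to: a split graph satisfying (i) and (ii) admits a partition as in Definition \ref{gcdef}.

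The gap is that this last implication --- the whole substance of the ``if'' direction --- is only announced. Producing the sets $A_0,\dots,A_n$, $X_1,\dots,X_{n+1}$, $M_1,\dots,M_l$, $Y_1,\dots,Y_{l+2}$ and verifying clauses (1)--(10) of Definition \ref{gcdef} (the nested adjacencies in (4), the perfect matchings in (6), the staircase conditions (7)--(8) with the threshold $k_0$, and the non-emptiness requirements in (9)) needs precisely the arguments you defer: transitivity of the crossing relation on $S$, the bijective matching of each crossing class with a set of clique vertices realizing the one-element differences of (ii), and the compatibility of these blocks with the inclusion order of the remaining neighborhoods; you yourself call this ``the crux,'' and the proposal gives no indication of how the required chairs or co-chairs are extracted in the problematic configurations. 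There are also small inaccuracies in the announced step: the crossing classes of $S$ would be the stable sets $M_i$ (you call them cliques), and they need not coincide with the $M_i$ of a given comb representation (a singleton $M_i$ produces a non-crossing vertex), so the reconstruction must be set up to build \emph{some} valid representation rather than to recover a canonical one. The ``only if'' direction is likewise asserted rather than executed, though there the work is genuinely routine --- for instance by computing the $S$-traces of the clique vertices and the $K$-traces of the stable vertices from Definition \ref{gcdef} and checking (i) and (ii) directly. As it stands, the proposal reduces the theorem to an equivalent statement but does not prove it.
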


\begin{cor}\cite{combch}
  Every threshold graph is a comb.
\end{cor}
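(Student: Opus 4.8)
The plan is to deduce the corollary purely from the two forbidden-subgraph characterizations already established (the Hammer--Chv\`{a}tal theorem and the comb characterization theorem), rather than from the structural definitions. Recall that $G$ is a threshold graph if and only if $C_4$, $\overline{C}_4$ and $P_4$ are forbidden, while $G$ is a comb if and only if $C_4$, $\overline{C}_4$, $C_5$, chair and co-chair are forbidden. Since the forbidden pair $C_4, \overline{C}_4$ is common to both lists, it suffices to show that a graph in which $P_4$ is forbidden also has $C_5$, chair and co-chair as forbidden subgraphs.

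The key observation is that each of $C_5$, the chair and the co-chair contains an induced $P_4$. For $C_5=v_1v_2v_3v_4v_5v_1$, the vertices $v_1,v_2,v_3,v_4$ induce the path $v_1v_2v_3v_4$, since among these four vertices the only edges are $v_1v_2$, $v_2v_3$, $v_3v_4$. For the chair on $\{x,y,z,t,v\}$ with edges $xy,yz,zt,zv$, the vertices $x,y,z,t$ induce the path $xyzt$, since $xz$, $xt$ and $yt$ are non-edges. Finally, since $P_4$ is self-complementary, the four vertices of the co-chair that correspond to an induced $P_4$ of the chair induce $\overline{P_4}\cong P_4$ in the co-chair.

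Combining these facts: if $G$ is a threshold graph, then $G$ contains no induced $P_4$, hence no induced copy of any graph that itself contains an induced $P_4$; in particular $G$ contains no induced $C_5$, chair or co-chair. Since $G$ also has no induced $C_4$ or $\overline{C}_4$, the comb characterization theorem gives that $G$ is a comb.

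I do not anticipate a genuine obstacle here; the only care needed is to check, edge by edge, that the selected four-vertex subsets of $C_5$ and of the chair really induce a $P_4$ with no extra chords, and to apply the self-complementarity of $P_4$ correctly to the co-chair. An alternative but longer route would start from the $\{S,K\}$-threshold decomposition and directly exhibit the sets $A_i, M_i, X_i, Y_i$ required by the definition of a comb (for instance, taking all the $M_i$ and the extra $Y$-sets to be empty), but the forbidden-subgraph argument above is shorter and more transparent.
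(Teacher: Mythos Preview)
Your argument is correct. Each of $C_5$, the chair and the co-chair indeed contains an induced $P_4$ (and $P_4\cong\overline{P_4}$ handles the co-chair cleanly), so any threshold graph is $\{C_4,\overline{C}_4,C_5,\text{chair},\text{co-chair}\}$-free and hence a comb by the characterization theorem.

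Regarding comparison: the paper does not actually supply a proof of this corollary; it is simply cited from \cite{combch} as a known consequence of the forbidden-subgraph characterization of combs. Your forbidden-subgraph deduction is exactly the natural way to read it off from Theorems~1 and~2 as stated here, and is presumably what the cited source has in mind. The alternative structural route you mention---taking the $\{S,K\}$-threshold decomposition and setting all $M_i$ empty (so that conditions (6)--(8) of Definition~\ref{gcdef} become vacuous)---is also valid and is essentially the content of Proposition~\ref{gs in gc} read in reverse; it has the minor advantage of not relying on the characterization theorems, but is otherwise equivalent.
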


\begin{cor}\cite{combch}
  $G$ is a comb if and only if $\overline{G}$ is a comb.
\end{cor}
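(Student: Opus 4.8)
The plan is to deduce this immediately from the forbidden-subgraph characterization of combs (Theorem 2), bypassing the structural definition (Definition~\ref{gcdef}) altogether. The one elementary ingredient is that complementation is an involution reversing the induced-subgraph relation: for any graphs $H$ and $G$, the graph $H$ is (isomorphic to) an induced subgraph of $G$ if and only if $\overline{H}$ is (isomorphic to) an induced subgraph of $\overline{G}$, and $\overline{\overline{G}}=G$. Equivalently, $H$ is a forbidden subgraph of $G$ exactly when $\overline{H}$ is a forbidden subgraph of $\overline{G}$.

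First I would record that the five-element family of obstructions in Theorem 2 is closed under complementation. Indeed, $C_4$ and $\overline{C_4}$ form a complementary pair, the chair and the co-chair form a complementary pair, and $C_5$ is self-complementary, since the complement of the cycle $v_1v_2v_3v_4v_5v_1$ is the cycle $v_1v_3v_5v_2v_4v_1$. Hence complementing each member of this family merely permutes the family.

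Then, assuming $G$ is a comb, Theorem 2 gives that each of $C_4$, $\overline{C_4}$, $C_5$, chair and co-chair is a forbidden subgraph of $G$. By the observation above, the complement of each of these is a forbidden subgraph of $\overline{G}$; since complementation only permutes this list, it follows that each of $C_4$, $\overline{C_4}$, $C_5$, chair and co-chair is again a forbidden subgraph of $\overline{G}$, and Theorem 2 then yields that $\overline{G}$ is a comb. For the converse, I would apply this implication with $\overline{G}$ in the role of $G$ and invoke $\overline{\overline{G}}=G$.

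There is essentially no obstacle in this argument; the only point needing a (trivial) verification is the self-complementarity of $C_5$. I would deliberately avoid the alternative route of complementing the set decomposition of Definition~\ref{gcdef} directly and reassembling it into a comb structure on $\overline{G}$: although this can be carried out, it is far more laborious and produces nothing beyond what the short argument already gives.
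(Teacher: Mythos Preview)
Your proof is correct and is exactly the intended one: the paper states this result as a corollary (without proof, citing \cite{combch}) immediately after Theorem~2, and the reason it is a corollary is precisely that the forbidden family $\{C_4,\overline{C_4},C_5,\text{chair},\text{co-chair}\}$ is closed under complementation. There is nothing to add.
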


\begin{cor}\cite{combch}
  $G$ is a comb if and only if every induced subgraph of $G$ is a comb.
\end{cor}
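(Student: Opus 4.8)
The plan is to deduce this immediately from the forbidden-subgraph characterization of combs (the preceding theorem: $G$ is a comb if and only if $C_4$, $\overline{C}_4$, $C_5$, chair and co-chair are forbidden subgraphs of $G$), exploiting the fact that having a prescribed list of forbidden induced subgraphs is a hereditary property.

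First I would record the elementary observation that the relation ``is an induced subgraph of'' is transitive: if $H$ is an induced subgraph of $G'$ and $G'$ is an induced subgraph of $G$, then $H$ is an induced subgraph of $G$. Indeed, writing $G'=G[A]$ with $A\sub V(G)$ and $H=G'[B]$ with $B\sub A$, we have $H=G[B]$ with $B\sub V(G)$.

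For the forward implication, suppose $G$ is a comb and let $G'$ be any induced subgraph of $G$. If one of the five graphs $C_4$, $\overline{C}_4$, $C_5$, chair, co-chair were an induced subgraph of $G'$, then by transitivity it would also be an induced subgraph of $G$, contradicting the characterization theorem applied to $G$. Hence all five are forbidden subgraphs of $G'$, so $G'$ is a comb, by the characterization theorem applied to $G'$. For the converse, it suffices to note that $G$ is an induced subgraph of itself ($G=G[V(G)]$), so if every induced subgraph of $G$ is a comb, then in particular $G$ is a comb.

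There is essentially no obstacle here; the only points requiring (minimal) care are the transitivity of the induced-subgraph relation, which is exactly what makes the forbidden-subgraph list hereditary, and the fact that the previous theorem is an equivalence, so it may be invoked in both directions. One could alternatively attempt a direct structural proof, restricting each of the sets $A_i,M_i,X_i,Y_i$ of an $\{S,K\}$-comb to a subset $U\sub V(G)$ and verifying that conditions (1)--(10) of Definition~\ref{gcdef} persist (with the ``possibly empty'' clauses absorbing any sets that become empty), but this is more laborious and the forbidden-subgraph route is cleaner, so I would present only the latter.
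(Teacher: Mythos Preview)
Your argument is correct: the forbidden-subgraph characterization immediately makes the class of combs hereditary, and the converse is trivial since $G$ is an induced subgraph of itself. The paper itself supplies no proof for this corollary (it is merely cited from \cite{combch}), so there is nothing to compare against; your route via the preceding theorem is exactly the intended one-line deduction.
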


\begin{prop} \label{gs in gc}
  Let $G$ be a comb defined as in Definition \ref{gcdef}. Then $G'=G-\bigcup_{1\leq i\leq l}E[Y_i, M_i]$ is a threshold graph.
\end{prop}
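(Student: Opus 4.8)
The plan is to realize $G'$ as a split graph and then invoke the forbidden-subgraph characterization of threshold graphs (the theorem of Hammer and Chv\`atal quoted above), so that it suffices to forbid induced $C_4$, $\overline{C_4}$ and $P_4$ in $G'$. First note that by item (6) of Definition~\ref{gcdef} each $E[Y_i,M_i]$ is a perfect matching between $Y_i\subseteq K$ and $M_i\subseteq S$, where $S=A\cup M$ and $K=X\cup Y$; hence $G'$ is obtained from $G$ by deleting edges of $E[S,K]$ only. Consequently $S$ is still a stable set and $K$ is still a clique of $G'$, so $G'$ is an $\{S,K\}$-split graph. Since it is standard that no split graph contains an induced $C_4$ or $\overline{C_4}$ (in either of these graphs every partition of the four vertices into a clique part and a stable part leaves an edge inside the stable part), the whole problem reduces to showing that $G'$ has no induced $P_4$.

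To control the $P_4$'s I would compute, for every stable vertex $s$, its neighborhood into the clique side using items (4)--(8) of Definition~\ref{gcdef}, remembering that exactly the matchings $E[Y_i,M_i]$ have been deleted. This gives $N_{G'}(s)\cap K = Y\cup\bigcup_{1\le j\le i}X_j$ for $s\in A_i$ (which, since $X_1=Y_1\subseteq Y$, equals $Y$ as soon as $i\le 1$), and $N_{G'}(s)\cap K = \bigcup_{i<j\le l}Y_j$, enlarged by $Y_{l+1}$ exactly when $i\le k_0$, for $s\in M_i$. The key point is that all of these sets are totally ordered by inclusion: every $M_i$-neighborhood lies inside $Y$ and they shrink as $i$ increases, while $Y$ lies inside every $A_i$-neighborhood and those grow as $i$ increases. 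Hence $\{\,N_{G'}(s)\cap K : s\in S\,\}$ is a chain, the smallest member being $N_{G'}(s)\cap K$ for $s\in M_l$ and the largest for $s\in A_n$.

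This finishes the argument. A short case analysis shows that an induced $P_4$ inside a split graph must be of the form $s_1\,x_1\,x_2\,s_2$ with $x_1,x_2$ in the clique and $s_1,s_2$ in the stable set (at most two vertices of a $P_4$ can lie in a clique, and the two clique vertices cannot be the inner non-adjacent pair), and then $s_1\sim x_1$, $s_1\not\sim x_2$, $s_2\sim x_2$, $s_2\not\sim x_1$, so that $N_{G'}(s_1)\cap K$ and $N_{G'}(s_2)\cap K$ are incomparable. Since no two stable-side neighborhoods of $G'$ are incomparable, $G'$ has no induced $P_4$, and is therefore a threshold graph. (Alternatively, the same chain can be read off directly as an $\{S,K\}$-threshold decomposition of $G'$ in the sense of the definition of a threshold graph, which is likely closest to how the author wishes to present it.)

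I expect the only genuine work to be the bookkeeping in the second step: verifying that deleting the matchings really does leave the clique-side neighborhoods nested with no exceptions, while correctly handling the small degeneracies permitted by item (9) — $X_{n+1}$, $Y_{l+2}$, $Y_{l+1}$, $M_l$, $A_0$ possibly empty — the identification $Y_1=X_1$, and the cutoff index $k_0$ of item (8). Once the neighborhoods are pinned down, both the chain property and the reduction to $P_4$-freeness are immediate.
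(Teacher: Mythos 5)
Your proof is correct and takes essentially the same route as the paper: the paper's proof is a one-liner asserting that $G'$ clearly has no induced $C_4$, $\overline{C_4}$ or $P_4$ and then invoking the Hammer--Chv\`atal characterization of threshold graphs. Your split-graph observation together with the nested clique-side neighborhoods simply supplies the verification that the paper leaves as ``clear,'' so there is no substantive difference in approach.
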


\begin{proof}

It is clear that $G'$ contains no induced $C_4$, $\overline{C}_4$ or $P_4$. Thus $G'$ is a threshold graph.

\end{proof}

\begin{thm}(\cite{combch}) \label{allowedC5}
  $C_4$, $\overline{C_4}$, chair and co-chair are forbidden subgraphs of a graph $G$ if and only if $V(G)$ is disjoint union of three sets $S$, $K$ and $C$ such that:

  \begin{description}
    \item[1)] $G[S\cup K]$ is an $\{S, K\}$-comb;
    \item[2)] $G[C]$ is empty or isomorphic to the cycle $C_5$;
    \item[3)] every vertex in $C$ is adjacent to every vertex in $K$ but to no vertex in $S$.
  \end{description}

\end{thm}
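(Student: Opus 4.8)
The plan is to prove the two implications separately. The ``if'' direction will be a bounded case check, while the ``only if'' direction carries the real content.

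\emph{Sufficiency.} Suppose $V(G)=S\cup K\cup C$ with properties (1)--(3). If $C=\emptyset$, then $G=G[S\cup K]$ is a comb and hence forbids all five of $C_4,\overline{C_4},C_5$, chair, co-chair by the comb characterization of \cite{combch}; in particular it forbids the four graphs in question. So assume $G[C]\cong C_5$, and suppose toward a contradiction that some induced subgraph $H$ of $G$, on a vertex set $W$, is a $C_4$, $\overline{C_4}$, chair or co-chair. If $W\cap C=\emptyset$ then $H$ is an induced subgraph of the comb $G[S\cup K]$, which is impossible. Hence $W\cap C\neq\emptyset$. Since every vertex of $C$ is complete to $K$ and anticomplete to $S$, every vertex of $W\setminus C$ is adjacent either to all of $W\cap C$ or to none of it; moreover $G[W\cap C]$ is an induced subgraph of $C_5$ that is not isomorphic to $C_5$, hence a path together with (possibly) some isolated vertices. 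I would now run through the finitely many possibilities for $H$ and for $G[W\cap C]$: in each one the hypotheses force either an edge inside the stable set $S$, a non-edge inside the clique $K$, or a vertex of $W\setminus C$ adjacent to exactly one vertex of $W\cap C$ --- all contradictions. I expect to carry this out explicitly for $C_4$, $\overline{C_4}$ and the chair, and to obtain the co-chair case by complementation: $C_5$ is self-complementary, a comb complements to a comb, and ``complete to $K$, anticomplete to $S$'' becomes ``anticomplete to $K$, complete to $S$'', so $\overline G$ admits a decomposition of the same shape with $S$ and $K$ interchanged and hence has no induced chair, i.e.\ $G$ has no induced co-chair.

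\emph{Necessity.} Now assume $G$ has none of $C_4,\overline{C_4}$, chair, co-chair as an induced subgraph. If $G$ also has no induced $C_5$, then $G$ is a comb by the comb characterization and we finish with $C=\emptyset$. Otherwise fix an induced $C_5$ on $C_0=\{c_1,\dots,c_5\}$, with cyclic order $c_1c_2c_3c_4c_5c_1$. The crucial lemma is that every vertex $v\notin C_0$ is complete or anticomplete to $C_0$. To prove it I would assume $1\le|N(v)\cap C_0|\le 4$ and, using the dihedral symmetry of $C_5$, reduce $N(v)\cap C_0$ to one of $\{c_1\}$; $\{c_1,c_2\}$ or $\{c_1,c_3\}$; $\{c_1,c_2,c_3\}$ or $\{c_1,c_2,c_4\}$; $\{c_1,c_2,c_3,c_4\}$; and in each case exhibit a forbidden induced subgraph inside $\{v\}\cup C_0$ --- for instance $\{v,c_1,c_2,c_3,c_5\}$ is a chair in the first case, $\{v,c_1,c_3,c_4\}$ induces $\overline{C_4}$ and $\{v,c_1,c_2,c_3\}$ induces $C_4$ in the two size-$2$ cases, $\{v,c_1,c_2,c_3,c_4\}$ is a co-chair and $\{v,c_2,c_3,c_4\}$ induces $C_4$ in the two size-$3$ cases, and $\{v,c_1,c_4,c_5\}$ induces $C_4$ in the size-$4$ case. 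Granting the lemma, set $R_1=\{v:v$ complete to $C_0\}$ and $R_0=\{v:v$ anticomplete to $C_0\}$, so that $V(G)=C_0\cup R_1\cup R_0$ disjointly. A non-adjacent pair in $R_1$ together with a non-adjacent pair of $C_0$ would induce $C_4$, so $R_1$ is a clique; an adjacent pair in $R_0$ together with an adjacent pair of $C_0$ would induce $\overline{C_4}$, so $R_0$ is stable. Thus $G[R_0\cup R_1]$ is a split graph with split partition $(R_0,R_1)$; it has no induced $C_5$, since $C_5$ has clique number and independence number both $2$ and hence cannot be covered by a clique and a stable set with fewer than five vertices in total; and it inherits the absence of $C_4,\overline{C_4}$, chair, co-chair from $G$. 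Hence $G[R_0\cup R_1]$ is a comb, and in fact an $\{R_0,R_1\}$-comb. Taking $S=R_0$, $K=R_1$, $C=C_0$ then yields (2) and (3) at once and (1) by the previous sentence.

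\emph{Where the difficulty lies.} Two points will need genuine care. The first is the neighbourhood lemma: although each individual case is a three- or four-line verification, there are about ten cases and some of them produce the co-chair rather than one of the smaller graphs, so the bookkeeping has to be organized carefully. The second, and the one I regard as the real obstacle, is the claim that the concrete split partition $(R_0,R_1)$ of the comb $G[R_0\cup R_1]$ is a comb partition in the sense of Definition~\ref{gcdef} --- i.e.\ the partition-aware form of the comb characterization. I would obtain this by re-running the structural argument behind the comb characterization of \cite{combch} while keeping track of the prescribed bipartition, using that a stable vertex all of whose neighbours lie in the clique side is absorbed either into some $A_i$ (threshold behaviour, governed by axioms (4)--(5) of Definition~\ref{gcdef}) or into a tooth $M_i$ perfectly matched into $Y_i$ (axioms (6)--(8)), so that the partition can always be realised as required. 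This same partition-aware statement is also what legitimises the complementation shortcut used in the sufficiency proof.
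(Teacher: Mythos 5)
First, a point of comparison: this theorem is stated in the paper as an imported result, cited from \cite{combch}, and no proof of it appears in the paper at all, so there is no internal argument to measure your proposal against; it can only be judged on its own terms. On those terms, your overall route is the natural one and most of the concrete steps you do give are correct: the case analysis for the lemma that a vertex outside a fixed induced $C_5$ is complete or anticomplete to it checks out (your exhibited subgraphs are indeed a chair, $\overline{C_4}$, $C_4$, co-chair, $C_4$, $C_4$ in the respective cases), the arguments that $R_1$ is a clique and $R_0$ is stable are right, and so is the observation that a split graph has no induced $C_5$.

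The genuine gap is exactly the one you flag and then defer: once you set $C=C_0$, condition (3) of the theorem forces $K$ to be precisely $R_1$ and $S$ to be precisely $R_0$, so you really do need that $G[R_0\cup R_1]$ is an $\{R_0,R_1\}$-comb in the partition-aware sense of Definition \ref{gcdef}, and the partition-free consequence of the forbidden-subgraph characterization ("it is a comb with respect to some split partition") does not deliver this. Your plan to "re-run the structural argument of \cite{combch} keeping track of the bipartition" is a statement of intent, not a proof; as written, the necessity direction is incomplete at its crux. (One concrete way to close it without redoing the whole structure theorem: two split partitions of a split graph differ by at most one vertex on each side, so it suffices to check that moving a clique vertex with no stable neighbours, or a stable vertex complete to the clique, preserves realizability of Definition \ref{gcdef}, including the non-emptiness constraints of item 9 and the perfect-matching condition of item 6 --- but this still has to be done.) Two smaller remarks: the sufficiency direction is only a promised finite case check, and in it the contradiction "adjacent to exactly one vertex of $W\cap C$" is only available when $|W\cap C|\geq 2$, so the $|W\cap C|=1$ cases must be settled via an edge inside $S$ or a non-edge inside $K$; and the complementation shortcut for the co-chair quietly uses the same partition-aware fact (that the complement of an $\{S,K\}$-comb is a $\{K,S\}$-comb), so it does not let you escape that obligation.
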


Let $L=v_1v_2...v_n$ be an ordering of the vertices of a digraph $D$. $L$ is called a local median order of $D$ if it satisfies the \emph{feedback property}: For all $1\leq i\leq j\leq n:$
$$ d^{+}_{[i,j]}(v_i)  \geq  d^{-}_{[i,j]}(v_i) $$
and
$$ d^{-}_{[i,j]}(v_j) \geq  d^{+}_{[i,j]}(v_j)  $$
where $[i,j]:=D[\{v_i,v_{i+1}, ...,v_j\}]$.\\
In this case, the last vertex $v_n$ is called a feed vertex.
A local median order always exist. In fact, any order $L=v_1v_2...v_n$ that maximizes the set of arcs $(v_i,v_j)\in E(D)$ with $i<j$, is a local median order. We will need the following proposition.

\begin{prop}
Suppose that \mo is a local median order of a digraph $D$ and $e=(v_j,v_i)\in D$ with $i<j$.
Then $L$ is a local median order of the digraph $D'$ obtained from $D$ by reversing the orientation
of $e$.
\end{prop}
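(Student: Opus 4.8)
The plan is to verify the feedback property directly for $L$ as an order of $D'$, reusing as much as possible the fact that it holds for $D$. Fix indices $1\le p\le q\le n$ and consider the interval digraph $[p,q]' := D'[\{v_p,\dots,v_q\}]$; we must check the two inequalities $d^{+}_{[p,q]'}(v_p)\ge d^{-}_{[p,q]'}(v_p)$ and $d^{-}_{[p,q]'}(v_q)\ge d^{+}_{[p,q]'}(v_q)$. The only arc that differs between $D$ and $D'$ is $e$: in $D$ it is $(v_j,v_i)$ with $i<j$, and in $D'$ it is $(v_i,v_j)$. So for a given interval $[p,q]$, either $e$ is not contained in it (i.e. it is not the case that $p\le i$ and $j\le q$), in which case $[p,q]'=[p,q]$ and both inequalities hold because $L$ is a local median order of $D$; or $e$ lies inside the interval, i.e. $p\le i<j\le q$.

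So assume $p\le i<j\le q$. The reversal of $e$ changes the relevant degrees only when the endpoint in question is $v_i$ or $v_j$. I would split into cases according to whether $p=i$, $q=j$, both, or neither. If $p\notin\{i\}$ — more precisely if $v_p\notin\{v_i,v_j\}$, which since $p\le i<j$ means $p<i$ — then $d^{+}_{[p,q]'}(v_p)=d^{+}_{[p,q]}(v_p)$ and likewise for the in-degree, so the first inequality is inherited from $D$; symmetrically, if $v_q\notin\{v_i,v_j\}$, i.e. $q>j$, the second inequality is inherited. The substantive cases are therefore $v_p=v_i$ (so $p=i$), where we must recheck the first inequality at $v_i$, and $v_q=v_j$ (so $q=j$), where we must recheck the second inequality at $v_j$; note $v_p=v_j$ is impossible since $p\le i<j$, and $v_q=v_i$ is impossible since $i<j\le q$.

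Consider the case $p=i$. In $D$, the arc $e=(v_j,v_i)$ contributes to $d^{-}_{[p,q]}(v_i)$ but not to $d^{+}_{[p,q]}(v_i)$; in $D'$, the arc $(v_i,v_j)$ contributes to $d^{+}_{[p,q]'}(v_i)$ but not to $d^{-}_{[p,q]'}(v_i)$. Hence $d^{+}_{[p,q]'}(v_i)=d^{+}_{[p,q]}(v_i)+1$ and $d^{-}_{[p,q]'}(v_i)=d^{-}_{[p,q]}(v_i)-1$. Since $d^{+}_{[p,q]}(v_i)\ge d^{-}_{[p,q]}(v_i)$ holds in $D$ (as $p=i\le q$ and $L$ is a local median order of $D$), we get $d^{+}_{[p,q]'}(v_i)=d^{+}_{[p,q]}(v_i)+1\ge d^{-}_{[p,q]}(v_i)+1> d^{-}_{[p,q]}(v_i)-1=d^{-}_{[p,q]'}(v_i)$, so the first inequality holds with room to spare. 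The case $q=j$ is entirely symmetric: in $D'$ the reversed arc now leaves $v_j$ rather than entering it, so $d^{-}_{[p,q]'}(v_j)=d^{-}_{[p,q]}(v_j)+1$ and $d^{+}_{[p,q]'}(v_j)=d^{+}_{[p,q]}(v_j)-1$, and $d^{-}_{[p,q]}(v_j)\ge d^{+}_{[p,q]}(v_j)$ in $D$ gives the needed inequality, again strictly. When both $p=i$ and $q=j$ occur simultaneously, the two arguments apply to the two endpoints independently, so both inequalities hold. This exhausts all cases, and in every case the feedback inequalities hold for $[p,q]'$; therefore $L$ is a local median order of $D'$.

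I do not expect a genuine obstacle here — the statement is essentially bookkeeping about how one arc reversal perturbs at most one in-degree and one out-degree per interval, and the direction of the hypothesis on $e$ (it is a "backward" arc $(v_j,v_i)$ with $i<j$) is exactly what makes the perturbation help rather than hurt. The only thing to be careful about is the case analysis on the endpoints of the interval, making sure that the impossible configurations ($v_p=v_j$, $v_q=v_i$) are ruled out by $i<j$ and that when an endpoint is an interior point of $\{v_i,\dots,v_j\}$ the corresponding degree of that endpoint is genuinely unaffected by reversing $e$.
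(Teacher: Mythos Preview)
Your proof is correct and follows exactly the paper's approach, which dispatches the result in a single line by observing that reversing a backward arc ``preserves and strengthens the feedback property of $L$''; you have simply spelled out that observation as an explicit case analysis on the interval endpoints. (One harmless slip: in the $q=j$ case you wrote that in $D'$ the reversed arc ``leaves $v_j$ rather than entering it,'' but it is the other way around---your degree formulas $d^{-}_{[p,q]'}(v_j)=d^{-}_{[p,q]}(v_j)+1$ and $d^{+}_{[p,q]'}(v_j)=d^{+}_{[p,q]}(v_j)-1$ are nonetheless correct.)
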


\begin{proof}
Reversing an arc $(x,y)$ means removing it and adding $(y,x)$.
It is enough to note that reversing the orientation of such an arc with respect to $L$ preserves and strengthens the feedback property of $L$.
\end{proof}

We will use the following theorem.

\begin{thm}\cite{m.o.}
 Every feed vertex of a tournament has the SNP.
\end{thm}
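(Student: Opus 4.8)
The plan is to derive the statement directly from the feedback property of a local median order. Fix a tournament $T$, a local median order $L=v_1v_2\cdots v_n$, and the feed vertex $v=v_n$. Write $B=N^+(v)$ and $G=N^-(v)$. Since $T$ is a tournament, $V(T)$ is the disjoint union of $\{v\}$, $B$ and $G$, every vertex of $N^{++}(v)$ lies in $G$, and a vertex $g\in G$ belongs to $N^{++}(v)$ exactly when $b\to g$ for some $b\in B$. Hence $N^{++}(v)=\bigcup_{b\in B}\bigl(N^+(b)\cap G\bigr)$, and it suffices to build an injection $\phi\colon B\to N^{++}(v)$ with $\phi(b)\in N^+(b)$ for every $b\in B$, since then $d^+(v)=|B|\le|N^{++}(v)|=d^{++}(v)$.

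Two features of $L$ will be used repeatedly. (a) Applying the feedback inequality $d^-_{[i,n]}(v)\ge d^+_{[i,n]}(v)$ for each $i$ shows that among $v_i,\dots,v_{n-1}$ there are always at least as many vertices of $G$ as of $B$. (b) The restriction of $L$ to any interval $\{v_p,\dots,v_q\}$ is a local median order of the induced sub-tournament, which is immediate from the definition. From (a) and (b) I would first prove the \emph{local domination fact}: every $b=v_j\in B$ dominates some vertex of $G$ lying to its right in $L$. Indeed, if $v_j$ dominated no $G$-vertex among $v_{j+1},\dots,v_{n-1}$, then all those $G$-vertices, together with $v_n$, would be in-neighbours of $v_j$, while every out-neighbour of $v_j$ in $\{v_{j+1},\dots,v_{n-1}\}$ would be a $B$-vertex; the feedback inequality $d^+_{[j,n]}(v_j)\ge d^-_{[j,n]}(v_j)$ would then force strictly more $B$-vertices than $G$-vertices among $v_{j+1},\dots,v_{n-1}$, contradicting (a) applied at index $j+1$.

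To upgrade the local domination fact to an injection, I would construct $\phi$ greedily by scanning $v_{n-1},v_{n-2},\dots,v_1$ from right to left while maintaining a pool $P$ of vertices of $G$ not yet used: when the scan reaches $v_j\in G$, insert $v_j$ into $P$; when it reaches $v_j\in B$, pick some $w\in P$ with $v_j\to w$, set $\phi(v_j)=w$, and delete $w$ from $P$. Every chosen $w$ lies in $G$ and is dominated by a $B$-vertex, hence $w\in N^{++}(v)$, and distinct elements of $B$ receive distinct images; so this produces the required $\phi$ as soon as one knows a legal choice is available at every $B$-step. That $P\ne\varnothing$ when $v_j\in B$ is reached is immediate: at that moment $|P|$ equals the number of $G$-vertices among $v_{j+1},\dots,v_{n-1}$ minus the number of $B$-vertices among them, which by (a) at index $j$ together with $v_j\in B$ is at least $1$.

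The one genuinely delicate point — and the step where I expect all the work to be — is that $v_j$ dominates a vertex of the \emph{current} pool, not merely some $G$-vertex to its right which may already have been consumed. The plan is to make the choice canonical, say always taking the pool member $v_m$ of smallest index, and, supposing $v_m\to v_j$ instead, to reach a contradiction by a local exchange argument: one tracks which $B$-vertices strictly between positions $j$ and $m$ were matched by the partial $\phi$ and to which $G$-vertices (with the smallest-index rule, no matching edge can jump over position $m$, so on the interval $\{v_{j+1},\dots,v_{m-1}\}$ the numbers of $B$- and $G$-vertices are controlled), and then uses (b) on $\{v_j,\dots,v_m\}$ together with the local domination fact to show that $L$ could be rearranged on this interval so as to strictly increase the number of forward arcs, contradicting that $L$ is a local median order; the proposition on reversing backward arcs recorded earlier is convenient for monitoring how such a rearrangement affects the median property, and the argument may need to be iterated. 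Making this bookkeeping close up cleanly is the crux; the reduction, the local domination fact, and the non-emptiness of the pool are all routine.
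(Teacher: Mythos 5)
This theorem is not proved in the paper at all: it is quoted from Havet and Thomass\'{e} \cite{m.o.}, so there is no internal proof to compare with; what matters is whether your argument stands on its own, and it does not yet. Your reduction to a system of distinct representatives, fact (a), the restriction fact (b), the ``local domination fact'' and the non-emptiness of the pool are all correct and routine, as you say. But the step you defer --- that when the scan reaches $v_j\in B$ it dominates a vertex of the \emph{current} pool, not merely some $G$-vertex to its right --- is precisely the entire content of the theorem, and it is left as a plan rather than a proof. It really does not follow from the counting you have set up: if you assume $v_j$ dominates only already-consumed $G$-vertices, the feedback inequality on $[j,n]$ gives $d^+_{[j,n]}(v_j)\le |B\cap(j,n)|+|M|$ and $d^-_{[j,n]}(v_j)\ge 1+|G\cap(j,n)|-|M|$, where $M$ is the set of matched $G$-vertices to the right of $v_j$ and $|M|=|B\cap(j,n)|$; combined with (a) this yields no contradiction. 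So a genuinely new idea (a specific choice rule plus an argument tailored to it) is required, and you have only gestured at one.

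Moreover, the contradiction you propose to aim for is not available under the definitions in force here. A \emph{local} median order is defined by the feedback property alone; it need not maximize the number of forward arcs, so exhibiting a rearrangement of an interval that ``strictly increases the number of forward arcs'' does not contradict anything. (That contradiction is legitimate only for global median orders, which would prove a weaker statement than ``every feed vertex of a tournament has the SNP,'' since feed vertices are defined via arbitrary local median orders.) To close the gap you must derive a violation of the feedback property itself on some interval, or argue by induction along the order with a carefully chosen matching rule, which is essentially what Havet and Thomass\'{e} do in \cite{m.o.}. As it stands, the proposal proves the easy surrounding facts but not the theorem.
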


\end{section}

\begin{section}{Characterization using dependency digraphs}

\hskip0.6cm Let $D$ be an oriented graph. For two vertices $x$ and $y$, we call $xy$ a \emph{missing edge} if $(x,y) \notin E$ and $(y,x)\notin E$. A vertex $v$ is \emph{whole} if it is not incident to any missing edge, i.e., $N^{+}(v)\cup N^{-}(v)=V(D)-\{v\}$.

 The \emph{missing graph} $G$ of $D$ is the graph formed by the missing edges, formally, $E(G)$ is the set of all the missing edges and $V(G)$ is the set of the non-whole vertices (vertices incident to some missing edges). In this case, we say that $D$ is \emph{missing} $G$.

\par We say that a missing edge $x_1y_1$ \emph{loses to} a missing edge $x_2y_2$ if:
$x_1\rightarrow x_2$, $y_2\notin N^{+}(x_1)\cup N^{++}(x_1)$, $y_1\rightarrow y_2$ and $x_2\notin N^{+}(y_1)\cup N^{++}(y_1)$.

The \emph{dependency digraph} $\Delta_{D}$ (or simply $\Delta$) of $D$ is defined as follows: Its vertex set consists of all the missing
edges of $D$ and $(ab,cd)\in E(\Delta)$ if and only if $ab$ loses to $cd$. Note that $\Delta$ may contain digons.

These digraphs were used in \cite{fidler, contrib} to prove SNC for some oriented graphs.\\

\begin{defn}\cite{a}
A missing edge $ab$ is called \emph{good} if:\\
$(i)$   $(\forall v \in V\backslash\{a,b\})[(v\rightarrow a)\Rightarrow(b\in N^{+}(v)\cup N^{++}(v))]$ or\\
$(ii)$ $(\forall v \in V\backslash\{a,b\})[(v\rightarrow b)\Rightarrow(a\in N^{+}(v)\cup N^{++}(v))]$.\\
If $ab$ satisfies $(i)$ we say that $(a,b)$ is a \emph{convenient orientation} of $ab$.\\
If $ab$ satisfies $(ii)$ we say that $(b,a)$ is a \emph{convenient orientation} of $ab$.\\
\end{defn}

\begin{lem}\cite{contrib}\label{goodmissinedgelemma}
 Let $D$ be an oriented graph and let $\Delta$ denote its dependency digraph. A missing edge $ab$ is good
if and only if its in-degree in $\Delta$ is zero.
\end{lem}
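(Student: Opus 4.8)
The plan is to prove the equivalent statement: $ab$ is \emph{not} good if and only if some missing edge loses to $ab$, i.e.\ $d^-_\Delta(ab)\geq 1$. Unwinding the definition of \emph{good}, the missing edge $ab$ fails to be good exactly when both clauses $(i)$ and $(ii)$ fail, which means there exist $u,w\in V\setminus\{a,b\}$ with
\[
u\to a,\qquad b\notin N^+(u)\cup N^{++}(u),\qquad w\to b,\qquad a\notin N^+(w)\cup N^{++}(w),
\]
the vertex $u$ witnessing the failure of $(i)$ and $w$ the failure of $(ii)$.

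One direction is immediate. Suppose a missing edge $cd$ loses to $ab$; after relabelling we may assume $c\to a$, $b\notin N^+(c)\cup N^{++}(c)$, $d\to b$ and $a\notin N^+(d)\cup N^{++}(d)$. Then $c$ is a witness to the failure of $(i)$ and $d$ is a witness to the failure of $(ii)$ (note $c,d\notin\{a,b\}$ automatically, since $ab$ is missing and a digraph has no loops), so $ab$ is not good.

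For the converse, I would assume $ab$ is not good and fix $u,w$ as in the displayed line. First one notes $u\neq w$, since otherwise $u\to b$ would contradict $b\notin N^+(u)$. The key step is to show that $uw$ is itself a missing edge: if $u\to w$ then $u\to w\to b$ gives $b\in N^+(u)\cup N^{++}(u)$, and if $w\to u$ then $w\to u\to a$ gives $a\in N^+(w)\cup N^{++}(w)$, both contradicting the choice of $u$ and $w$. Hence $u$ and $w$ are non-adjacent, $uw$ is a missing edge, and the four displayed conditions are exactly the requirements for ``$uw$ loses to $ab$''. Therefore $(uw,ab)\in E(\Delta)$ and $d^-_\Delta(ab)\geq 1$, completing the proof.

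The argument is a direct unwinding of the definitions; I expect the only real obstacle to be verifying that the two witnesses $u$ and $w$ form a missing edge rather than an oriented one, and this is precisely where the ``no directed path of length at most two'' conditions built into the definitions of \emph{good} and of \emph{loses to} are used. All potential degeneracies ($u$ or $w$ lying in $\{a,b\}$, or $u=w$) are ruled out directly by the hypotheses, so no extra case analysis is required.
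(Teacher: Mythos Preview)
Your proof is correct and is the standard argument for this lemma. Note that the present paper does not actually give a proof of this statement: the lemma is imported from \cite{contrib} and stated without proof, so there is nothing in the paper to compare your argument against beyond checking it matches the definitions used here, which it does.
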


In \cite{a}, threshold graphs are characterized using dependency digraphs and are called generalized stars.

\begin{thm}(\cite{a})
Let $G$ be a graph. The following statements are equivalent:
\begin{description}
  \item[i)] $G$ is a threshold graph;
  \item[ii)] Every missing edge of every oriented graph missing $G$ is good;
  \item[iii)]  The dependency digraph of every oriented graph missing $G$ is empty.
\end{description}
\end{thm}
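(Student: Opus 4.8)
The plan is to establish the cycle (i) $\Rightarrow$ (iii) $\Rightarrow$ (ii) $\Rightarrow$ (i). The equivalence of (ii) and (iii) comes essentially for free from Lemma~\ref{goodmissinedgelemma}: for a fixed oriented graph $D$ missing $G$, the statement ``every missing edge of $D$ is good'' says ``every vertex of $\Delta_D$ has in-degree $0$'', i.e. ``$\Delta_D$ has no arc'', i.e. ``$\Delta_D$ is empty''; quantifying over all $D$ missing $G$ gives (ii) $\Leftrightarrow$ (iii). So the real content is (i) $\Leftrightarrow$ (iii), and I would prove both directions by contraposition, using the Hammer--Chv\`atal characterization that a graph is threshold iff it contains no induced $C_4$, $\overline{C_4}$ or $P_4$.

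For (i) $\Rightarrow$ (iii) I would argue the contrapositive: suppose some $D$ missing $G$ has a nonempty dependency digraph, say $x_1y_1$ loses to $x_2y_2$. First I would check the four vertices are pairwise distinct: $x_1=x_2$ or $y_1=y_2$ would force a loop in $D$; if $x_1=y_2$ then $y_1\to x_1\to x_2$, so $x_2\in N^{+}(y_1)\cup N^{++}(y_1)$, contradicting the definition of ``loses to'', and symmetrically $y_1\neq x_2$; and $x_1\neq y_1$, $x_2\neq y_2$ since each is an edge. Next I would read off the six pairs among $\{x_1,y_1,x_2,y_2\}$ from the definition: $x_1y_1$ and $x_2y_2$ are missing edges of $D$, hence edges of $G$; the arcs $x_1\to x_2$ and $y_1\to y_2$ force $x_1x_2\notin E(G)$ and $y_1y_2\notin E(G)$; and $y_2\notin N^{+}(x_1)$ forces that there is no arc $x_1\to y_2$, so $x_1y_2$ is either an edge of $G$ or is oriented $y_2\to x_1$, and likewise for $y_1x_2$. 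Hence $G[\{x_1,y_1,x_2,y_2\}]$ has edge set $\{x_1y_1,x_2y_2\}$ together with some subset of $\{x_1y_2,\,y_1x_2\}$, and the four possibilities are: the empty subset gives an induced $\overline{C_4}$; exactly one of the two gives an induced $P_4$ (on $y_1x_1y_2x_2$, resp. $x_1y_1x_2y_2$); both give an induced $C_4$ (on $x_1y_1x_2y_2x_1$, with diagonals $x_1x_2$, $y_1y_2$). In every case $G$ is not threshold.

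For (iii) $\Rightarrow$ (i), again by contraposition, suppose $G$ is not threshold, so $G$ has an induced subgraph on four vertices isomorphic to $C_4$, $\overline{C_4}$ or $P_4$. Inside it I would choose two disjoint edges $x_1y_1$, $x_2y_2$ of $G$ whose cross pairs $x_1x_2$ and $y_1y_2$ are non-edges of $G$ --- two opposite edges of the $C_4$, the two edges of the $\overline{C_4}$, or the two end-edges of the $P_4$. I then build $D$ on vertex set $V(G)$ by orienting $\overline{G}$ as follows: orient $x_1x_2$ as $x_1\to x_2$ and $y_1y_2$ as $y_1\to y_2$; orient every other edge of $\overline{G}$ at $x_1$ towards $x_1$ and every other edge of $\overline{G}$ at $y_1$ towards $y_1$; orient the remaining edges arbitrarily. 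These rules never clash: an edge of $\overline{G}$ hit by both an $x_1$-rule and a $y_1$-rule would have to be $x_1y_1$, which is not in $\overline{G}$ since $x_1y_1\in E(G)$, and $x_1x_2\neq y_1y_2$ as the four vertices are distinct. By construction $N^{+}(x_1)=\{x_2\}$ and $N^{+}(y_1)=\{y_2\}$; since $x_2y_2\in E(G)$ there is no arc between $x_2$ and $y_2$, so $y_2\notin N^{+}(x_2)$ and $x_2\notin N^{+}(y_2)$, whence $y_2\notin N^{+}(x_1)\cup N^{++}(x_1)$ and $x_2\notin N^{+}(y_1)\cup N^{++}(y_1)$. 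Together with $x_1\to x_2$ and $y_1\to y_2$ this says exactly that $x_1y_1$ loses to $x_2y_2$, so $\Delta_D$ is nonempty and (iii) fails.

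I expect the main obstacle to be the (iii) $\Rightarrow$ (i) construction: the condition $y_2\notin N^{++}(x_1)$ is global, forbidding \emph{every} two-step walk $x_1\to z\to y_2$, so orienting only the four obstruction vertices would not suffice. The device of forcing $x_1$ and $y_1$ each to have a unique out-neighbour makes the second out-neighbourhoods trivial to compute, and the crucial legality check is that the two forcing rules cannot collide precisely because $x_1y_1$ is itself a missing edge. On the (i) $\Rightarrow$ (iii) side, the point to get right is that ``loses to'' pins down exactly four of the six adjacencies among $\{x_1,y_1,x_2,y_2\}$ and leaves two free, and that the four resulting graphs are exactly $\overline{C_4}$, $P_4$ (twice) and $C_4$.
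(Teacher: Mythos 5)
Your proposal is correct, but there is nothing in this paper to compare it against: the theorem is quoted from reference \cite{a} and no proof is reproduced here, so your argument stands as a self-contained alternative. Your reduction of (ii) $\Leftrightarrow$ (iii) to Lemma \ref{goodmissinedgelemma} is exactly how that lemma is meant to be used, and the real work is your two contrapositives built on the Hammer--Chv\`atal characterization (Theorem 3 of the paper): the case analysis showing that a losing pair forces the four endpoints to be distinct and to induce $\overline{C_4}$, $P_4$ or $C_4$ in $G$ is complete (all six pairs are accounted for), and the converse construction --- orienting $\overline{G}$ so that $x_1$ and $y_1$ each have a unique out-neighbour, with the legality of the forcing rules guaranteed precisely because $x_1y_1\in E(G)$ --- correctly produces a losing relation, since $x_2y_2$ being a missing edge kills the only possible two-step walks. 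This is a more ``forbidden-subgraph'' route than the structural description of threshold graphs (generalized stars) used in \cite{a}; it buys a short, fully local argument at the cost of invoking the Hammer--Chv\`atal theorem. The only point to tidy is a convention issue rather than a gap in your reasoning: under the paper's strict definition the missing graph of an oriented graph has no isolated vertices, so in your (iii) $\Rightarrow$ (i) construction the oriented graph you build misses $G$ only up to isolated vertices of $G$; since adding or deleting isolated vertices does not affect thresholdness, it suffices to say this explicitly (or assume $G$ has no isolated vertices), and the same caveat already afflicts the theorem statement itself.
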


\begin{prob}
Let $\mathcal{H}$ be a family of digraphs (digons are allowed) and let
$\mathcal{F}(\mathcal{H})=\{G$ is a graph; $\forall D$ missing $G$, $\Delta_{D}\in \mathcal{H}\}$ .
Characterize $\mathcal{F}(\mathcal{H})$.
\end{prob}

\begin{prop}\label{inducedforce}
  Suppose that $G\in \mathcal{F}(\mathcal{H})$. If $G'$ is an induced subgraph of $G$, then $G'\in \mathcal{F}(\mathcal{H^*})$, where $\mathcal{H^*}=\{H^*; \exists H \in \mathcal{H}, H=H^*$ plus a set of  isolated vertices of $H\}$.
\end{prop}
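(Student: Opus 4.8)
The plan is to show directly that every missing edge of an oriented graph $D'$ missing $G'$, together with its incident ``loses-to'' relations, transports to an oriented graph $D$ missing $G$, whose dependency digraph lies in $\mathcal{H}$; the dependency digraph of $D'$ will then be recovered as $\Delta_D$ with a bunch of isolated vertices deleted. First I would fix an oriented graph $D'$ missing $G'$ and build an oriented graph $D$ missing $G$ as follows: take $V(D)=V(D')\cup (V(G)\setminus V(G'))$, keep all arcs of $D'$, keep the missing edges of $G$ that lie inside $V(G')$ (these are exactly the missing edges of $G'$, since $G'$ is induced in $G$), and for every pair $xy$ that is \emph{not} a missing edge of $G$ orient it; the orientation on the ``new'' vertices and between new and old vertices can be chosen arbitrarily (say, by any fixed linear order on $V(D)$ refining the structure), the only constraint being that we do \emph{not} create digons and that the missing graph of $D$ is precisely $G$. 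This is possible because the missing edges we are forced to keep form exactly $E(G)$ and we are free to orient everything else.

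Next I would compare $\Delta_{D'}$ and $\Delta_D$. Their vertex sets are the missing edges of $G'$ and of $G$ respectively, so $V(\Delta_{D'})\subseteq V(\Delta_D)$. The key claim is that for two missing edges $ab,cd$ of $G'$, $ab$ loses to $cd$ in $D'$ if and only if $ab$ loses to $cd$ in $D$. One direction is immediate because the ``loses to'' relation is defined by arcs $a\to c$, $b\to d$ and non-membership conditions $d\notin N^+(a)\cup N^{++}(a)$, $c\notin N^+(b)\cup N^{++}(b)$, and the first-neighbourhoods of $a,b,c,d$ inside $D$ restricted to $V(D')$ coincide with those in $D'$; for the second-neighbourhood conditions one has to check that no newly added vertex can sneak into $N^{++}_D(a)$ in a way that would change a non-membership statement about $c$ or $d$ — but $c,d\in V(D')$, so membership of $c$ (or $d$) in $N^{+}_D(a)\cup N^{++}_D(a)$ is decided entirely by arcs among $V(D')$, i.e. by $D'$. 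Hence the ``loses to'' relations among old missing edges are literally the same, so $\Delta_D$ induced on $V(\Delta_{D'})$ equals $\Delta_{D'}$, and every missing edge of $G$ not already in $G'$ is an isolated vertex of $\Delta_D$ only if it has in- and out-degree zero there — which need not hold, so I must be slightly more careful: instead I should choose the orientation of the non-forced edges of $D$ so that the extra missing edges (there are none, actually: $V(G)\setminus V(G')$ may still be incident to missing edges of $G$) behave well. The cleanest fix is to note $V(\Delta_D)=E(G)$ regardless, and simply set $H:=\Delta_D\in\mathcal{H}$ and $H^*:=\Delta_{D'}$; since $\Delta_{D'}=\Delta_D[E(G')]$ and $\Delta_D$ restricted to its remaining vertices plus the claim above shows $H$ is $H^*$ together with the vertices $E(G)\setminus E(G')$, these extra vertices are isolated in $H$ \emph{precisely when} our construction makes them so.

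Therefore the real work is to construct $D$ so that the missing edges in $E(G)\setminus E(G')$ become isolated vertices of $\Delta_D$. I would achieve this by orienting $D$ ``generically'' on the new part: process $V(G)\setminus V(G')$ one missing edge at a time and orient all non-forced incident arcs to kill every potential ``loses to'' relation in or out of that edge — e.g. make one endpoint a near-source and the other a near-sink within a fixed global order, which by the feedback-type reasoning of the earlier propositions can always be arranged without creating digons. The main obstacle I anticipate is exactly this bookkeeping: verifying that one can simultaneously (a) keep the missing graph of $D$ equal to $G$, (b) avoid digons, and (c) make every new missing edge isolated in $\Delta_D$, while not disturbing the already-present missing edges of $G'$. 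Once that construction is in hand, the equivalence $\Delta_{D'}=\Delta_D\setminus\{\text{isolated vertices}\}$ with $\Delta_D\in\mathcal H$ gives $\Delta_{D'}\in\mathcal H^*$, and since $D'$ was an arbitrary oriented graph missing $G'$, we conclude $G'\in\mathcal F(\mathcal H^*)$, as required. $\qquad\blacksquare$
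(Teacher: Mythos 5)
Your overall strategy --- extend an arbitrary $D'$ missing $G'$ to some $D$ missing $G$ and argue that $\Delta_{D'}$ is $\Delta_D$ minus isolated vertices --- is indeed the paper's strategy, but your execution has a genuine gap in the two places where the work actually lies. First, the step you call ``immediate'' is false for an arbitrary completion: membership of $d$ in $N^{++}_D(a)$ is \emph{not} decided by arcs among $V(D')$, because a $2$-path $a\rightarrow w\rightarrow d$ through a newly added vertex $w$ puts $d\in N^{++}_D(a)$ and thereby destroys the condition $d\notin N^{+}(a)\cup N^{++}(a)$ that held in $D'$. So for a generic extension $\Delta_D[E(G')]$ can be a proper subdigraph of $\Delta_{D'}$, and knowing $\Delta_D\in\mathcal{H}$ then gives no control over $\Delta_{D'}$. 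Second, the construction that would repair this --- orient the new part so that (a) no new $2$-path joins two old vertices (preserving the old losing relations) and (b) every missing edge meeting a new vertex is isolated in $\Delta_D$ --- is exactly what you leave as unverified ``bookkeeping''. The near-source/near-sink idea by itself does not deliver (b): to prevent an old edge $ab$ from losing to a new missing edge $uv$ you must guarantee $v\in N^{+}(x)\cup N^{++}(x)$ for every old vertex $x$, and a sink-like $v$ does not provide this when all out-neighbours of $x$ happen to be non-adjacent to $v$ in $D$.

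The paper closes precisely this gap with an explicit one-vertex-at-a-time construction (plus induction on the number of deleted vertices): to restore a removed vertex $v$ it also adds two auxiliary whole vertices $\alpha,\beta$, orients every non-missing pair $xv$ with $x\in V(D')$ as $(x,v)$, and adds the arcs $(\alpha,v)$, $(v,\beta)$, $(\alpha,\beta)$, $(x,\alpha)$ and $(\beta,x)$ for all old $x$. Then $v$ and $\alpha$ have no out-neighbours in $V(D')$, so no new $2$-path joins two old vertices and no missing edge at $v$ can lose to anything; and $x\rightarrow\alpha\rightarrow v$ ensures $v\in N^{+}(x)\cup N^{++}(x)$ for every old $x$, so nothing loses to a missing edge at $v$. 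Hence $\Delta_D$ equals $\Delta_{D'}$ plus isolated vertices, which is what your argument needs but does not establish; without such an explicit device (or a proof that your generic orientation achieves (a) and (b) simultaneously) the proposal does not go through.
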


\begin{proof}
Suppose that $G\in \mathcal{F}(\mathcal{H})$ and assume first that $G'=G-v$, for some $v\in V(G)$. Let $D'$ be any oriented graph missing $G'$. Let $\alpha$ and $\beta$ be 2 distinct extra vertices neither in $D'$ nor in $G$.  Define $D$ as follows. The missing graph of $D$ is $G$ and $V(D)=V(D')\cup \{v,\alpha,\beta\}$. $D-\{v,\alpha,\beta\}=D'$. The arcs $(\alpha, v)$, $(v, \beta)$ and $(\alpha, \beta)$ are in $D$. For every $x\in V(D')$, if $xv\notin E(G)$, then $(x,v)\in E(D')$. Finally, for every $x\in V(D')$, the arcs $(x,\alpha)$ and $(\beta, x)$ are in $D$. Then the addition, in this way, of $v, \alpha$ and $\beta$ to $D'$ neither affects the losing relations between missing edges of $D'$ nor creates new ones. Hence, $\Delta_{D}$ is equal to $\Delta_{D'}$ plus isolated vertices (these isolated vertices are the edges of $G$ incident to $v$). Since $D$ is missing $G$, then $\Delta_{D}\in \mathcal{H}$. Whence, $\Delta_{D'}\in \mathcal{H^*}$. Thus $G'\in \mathcal{F}(\mathcal{H^*})$. Now, the proof follows by induction on the number of vertices removed from $G$ to obtain the induced subgraph.

\end{proof}

Note that $\mathcal{H}\sub \mathcal{H^*}$. Moreover, it is clear that, if $H^*\in \mathcal{H^*}$ and $ H \in \mathcal{H}$ such that $H=H^*$ plus a set of  isolated vertices of $H$, then $H^*$ is an induced subgraph of $H$ and differs from $H$ only by a set of isolated vertices.

It is obvious that if $\mathcal{A}$ and $\mathcal{B}$ are two sets of digraphs such that $\mathcal{A}\sub \mathcal{B}$, then $\mathcal{F}(\mathcal{A})\sub \mathcal{F}(\mathcal{B})$.\\

\begin{prob}
Let $\vec{\mathcal{P}}$ be a family of all digraphs consisting of vertex disjoint paths only. Characterize $\mathcal{F}(\vec{\mathcal{P}})$.
\end{prob}

\begin{prop}
$G \in \mathcal{F}(\vec{\mathcal{P}})$ if and only if $G' \in \mathcal{F}(\vec{\mathcal{P}})$, for every $G'$ induced subgraph of $G$.
\end{prop}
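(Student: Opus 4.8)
The plan is to deduce everything from Proposition \ref{inducedforce}, once we record that for the particular family $\vec{\mathcal{P}}$ the starred family $\vec{\mathcal{P}}^*$ is nothing new, namely $\vec{\mathcal{P}}^* = \vec{\mathcal{P}}$. Granting this, the forward implication is immediate from Proposition \ref{inducedforce}, and the backward implication is trivial because $G$ is an induced subgraph of itself.

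So the first (and only real) step is to check the identity $\vec{\mathcal{P}}^* = \vec{\mathcal{P}}$. The inclusion $\vec{\mathcal{P}} \subseteq \vec{\mathcal{P}}^*$ holds for every family of digraphs (in the definition of $\mathcal{H}^*$, take the empty set of isolated vertices, so every $H \in \mathcal{H}$ lies in $\mathcal{H}^*$). For the reverse inclusion, let $H^* \in \vec{\mathcal{P}}^*$; by definition there is $H \in \vec{\mathcal{P}}$ such that $H$ equals $H^*$ together with a set of isolated vertices of $H$, i.e.\ $H^*$ is obtained from $H$ by deleting some isolated vertices. Since $H$ is a vertex-disjoint union of directed paths, and an isolated vertex is precisely a one-vertex path component, deleting isolated vertices from $H$ again yields a vertex-disjoint union of directed paths. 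Hence $H^* \in \vec{\mathcal{P}}$, proving $\vec{\mathcal{P}}^* \subseteq \vec{\mathcal{P}}$.

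With the identity in hand, the forward direction is: assume $G \in \mathcal{F}(\vec{\mathcal{P}})$ and let $G'$ be an induced subgraph of $G$; Proposition \ref{inducedforce} gives $G' \in \mathcal{F}(\vec{\mathcal{P}}^*) = \mathcal{F}(\vec{\mathcal{P}})$. For the backward direction, apply the hypothesis to the induced subgraph $G' = G$ to get $G \in \mathcal{F}(\vec{\mathcal{P}})$ directly.

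The main (and very minor) point to be careful about is the correct reading of the definition of $\mathcal{H}^*$: the phrase ``$H = H^*$ plus a set of isolated vertices of $H$'' must be interpreted as $H^*$ being $H$ with some of its isolated vertices removed, not with arbitrary vertices added, so that one cannot leave $\vec{\mathcal{P}}$ — the class $\vec{\mathcal{P}}$ is closed under removal of isolated vertices, and this closure property is exactly what makes $\vec{\mathcal{P}}^* = \vec{\mathcal{P}}$. I do not expect any other obstacle; the whole argument is a one-line application of Proposition \ref{inducedforce} after this observation.
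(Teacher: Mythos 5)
Your proposal is correct and follows essentially the same route as the paper, whose proof is the one-line observation that $\mathcal{F}(\vec{\mathcal{P}})=\mathcal{F}(\vec{\mathcal{P}}^*)$ because every isolated vertex is a directed path, combined with Proposition \ref{inducedforce}. Your more careful verification that $\vec{\mathcal{P}}^*=\vec{\mathcal{P}}$ and the explicit note that the backward direction is trivial just spell out what the paper leaves implicit.
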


\begin{proof}

Enough to note that  $\mathcal{F}(\vec{\mathcal{P}})=\mathcal{F}(\vec{\mathcal{P}}^*)$, because every isolated vertex is a directed path.

\end{proof}

\begin{prop}\label{notinFP}
 $\overline{C}_4$, chair and co-chair are not in $\mathcal{F}(\vec{\mathcal{P}})$.
\end{prop}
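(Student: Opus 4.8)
The plan is, for each of the graphs $\overline{C}_4$, the chair and the co-chair, to exhibit a single oriented graph $D$ missing that graph whose dependency digraph $\Delta_D$ is \emph{not} a vertex-disjoint union of directed paths. Since every digraph in $\vec{\mathcal{P}}$ has all in-degrees and out-degrees at most $1$ and contains no directed cycle (in particular no digon), it suffices in each case to produce $D$ for which $\Delta_D$ has either a digon or a vertex of out-degree $2$. I will also use the elementary remark that two missing edges sharing an endpoint can never stand in the ``loses to'' relation: if $ab$ lost to $ac$, then among the four arcs required by the definition one would have to join the two endpoints of a missing edge. Hence only disjoint pairs of missing edges can be adjacent in $\Delta_D$, which drastically limits the possibilities.

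For $\overline{C}_4\cong 2K_2$ I would take $D$ to be the directed $4$-cycle $a\ra b\ra c\ra d\ra a$; its missing graph is exactly $\{ac,bd\}$. All first and second out-neighbourhoods of $D$ are singletons, and using the rotational automorphism of $D$ one checks straight from the definition that $ac$ loses to $bd$ and $bd$ loses to $ac$. Thus $\{ac,bd\}$ is a digon of $\Delta_D$, and $\overline{C}_4\notin\mathcal{F}(\vec{\mathcal{P}})$.

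For the chair, label its vertices $x,y,z,t,v$ with edges $xy,yz,zt,zv$ and let $D$ be the oriented graph on these five vertices whose missing edges are exactly these four (so the present edges are $xz,xt,xv,yt,yv,tv$). By the remark above, $xy$ can be adjacent in $\Delta_D$ only to $zt$ and to $zv$; moreover, once an orientation is fixed, ``$xy$ loses to $zt$'' forces the two arcs involved to realise the unique perfect matching $\{xz,yt\}$ of the (path-like) bipartite graph of present edges between $\{x,y\}$ and $\{z,t\}$, and likewise $\{xz,yv\}$ for $zv$. I would therefore orient $D$ with $x$ a source pointing to $z$, with $y$ a source pointing to $t$ and $v$, and with the remaining present arcs pointing toward $x$; then $N^{+}(x)=\{z\}$ and $N^{+}(y)=\{t,v\}$, and a short check of the two non-domination conditions shows that $xy$ loses both to $zt$ and to $zv$, so $xy$ has out-degree $2$ in $\Delta_D$. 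The co-chair is treated by the mirror construction: arrange the five vertices so that the missing edges are $xz,xt,xv,yt,yv,tv$ (present edges $xy,yz,zt,zv$), orient $x\ra y\ra z$ together with $z\ra t$ and $z\ra v$, reduce as before to the unique matchings $\{xy,zt\}$ and $\{xy,zv\}$, and verify that the missing edge $xz$ loses both to $yt$ and to $yv$, giving it out-degree $2$ in $\Delta_D$.

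The delicate point in each case is not producing the two arcs of the ``loses to'' relation but verifying the accompanying non-domination conditions $s\notin N^{+}(p)\cup N^{++}(p)$ and $r\notin N^{+}(q)\cup N^{++}(q)$; an injudicious orientation would enlarge some second out-neighbourhood and break one of them. The constructions above are engineered precisely so that making one endpoint of each missing edge a source, with the remaining present arcs flowing inward, keeps every first and second out-neighbourhood empty or a singleton, after which each verification is a one-line computation. The only real obstacle is this bookkeeping; once the orientations are chosen, the proposition follows.
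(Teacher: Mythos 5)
Your proposal is correct and takes essentially the same route as the paper: for each of $\overline{C}_4$, the chair and the co-chair you exhibit an explicit small oriented graph missing it whose dependency digraph contains a digon or a vertex of out-degree $2$, hence is not a disjoint union of directed paths (the paper's own examples are the same kind, up to relabelling). Your verifications of the losing relations check out (for the chair, either orientation of the edge $tv$ works), so apart from the slight wording slip describing $x$ and $y$ as ``sources,'' the argument matches the paper's.
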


\begin{proof}
  Let $D$ be the oriented graph with only the vertices $a$, $b$, $c$ and $d$ and only the arcs $(a,c)$, $(b,d)$, $(d,a)$ and $(c,b)$. Then $D$ is missing $\overline{C}_4$, $ab$ loses to $cd$ and $cd$ loses to $ba$. Thus $\Delta_D\notin \vec{\mathcal{P}}$.\\

  Let $D'$ be the oriented graph with only $a$, $b$, $c$, $d$ and $x$ and only the arcs $(a,d)$, $(b,c)$, $(c,a)$, $(b,x)$, $(x,a)$ and $(x,c)$. Then $D'$ is missing a chair, $ab$ loses to both $dc$ and $dx$. Thus $\Delta_{D'}\notin \vec{\mathcal{P}}$.\\

  Let $D''$ be the oriented graph with only $a$, $b$, $c$, $d$ and $x$ and only the arcs $(a,c)$, $(b,d)$, $(d,a)$, $(a,c)$. Then $D''$ is missing a co-chair, $ab$ loses to both $dc$ and $dx$. Thus $\Delta_{D''}\notin \vec{\mathcal{P}}$.
\end{proof}

It is easy to check the following:

\begin{prop}(\cite{contrib})
  $C_4, C_5\in \mathcal{F}(\vec{\mathcal{P}})$.
\end{prop}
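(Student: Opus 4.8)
The approach I would take is to pin down $\Delta_{D}$ almost completely from the combinatorics of the missing graph, using two simple observations. First, if a missing edge $e$ loses to a missing edge $f$ in $D$, then $e$ and $f$ are vertex-disjoint, since sharing a vertex would force $D$ to contain an arc between the two endpoints of a missing edge. Second, if $e=\{p,q\}$ and $f=\{r,s\}$ are disjoint, any losing relation between them (in either direction) requires $D$ to contain a perfect matching of arcs between $\{p,q\}$ and $\{r,s\}$; a pair that is itself a missing edge of the missing graph $G$ carries no arc, so if one of the two possible matchings $\{pr,qs\}$, $\{ps,qr\}$ contains a missing pair, then the losing relation can only be realized by the other matching, and the pair of $D$-arcs it uses is forced. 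Thus the first step is to record, for $G=C_{4}$ and $G=C_{5}$, the disjointness graph on the missing edges of $G$ together with, for each disjoint pair, the forced pair of $D$-arcs.

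For $C_{4}=v_{1}v_{2}v_{3}v_{4}v_{1}$ the missing edges are $v_{1}v_{2},v_{2}v_{3},v_{3}v_{4},v_{4}v_{1}$, and the only disjoint pairs are $\{v_{1}v_{2},v_{3}v_{4}\}$ and $\{v_{2}v_{3},v_{4}v_{1}\}$; in each of these one of the two matchings consists of missing edges (for $\{v_{1}v_{2},v_{3}v_{4}\}$ it is $\{v_{1}v_{4},v_{2}v_{3}\}$), so the forced arcs are the two diagonals $v_{1}v_{3}$ and $v_{2}v_{4}$. Hence every arc of $\Delta_{D}$ lies inside one of the two disjoint pairs, and a digon inside such a pair is impossible: $v_{1}v_{2}\to v_{3}v_{4}$ would require $v_{1}\to v_{3}$ and $v_{2}\to v_{4}$, while $v_{3}v_{4}\to v_{1}v_{2}$ would require $v_{3}\to v_{1}$ and $v_{4}\to v_{2}$. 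Thus $\Delta_{D}$ is a subdigraph of a digraph with two independent arcs, that is, a disjoint union of directed paths, so $C_{4}\in\mathcal{F}(\vec{\mathcal{P}})$.

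For $C_{5}=v_{1}v_{2}v_{3}v_{4}v_{5}v_{1}$ one checks that each missing edge $v_{i}v_{i+1}$ is vertex-disjoint from exactly $v_{i+2}v_{i+3}$ and $v_{i-2}v_{i-1}$ (indices mod $5$), so the disjointness graph is itself a $5$-cycle and every arc of $\Delta_{D}$ runs along it; the forced-matching argument again rules out digons. Next I would bound the degrees of $\Delta_{D}$: a short computation shows that no missing edge can lose to both of its neighbors in the disjointness $5$-cycle, nor be lost to by both, because the arcs forced by one such losing relation directly contradict a second-neighborhood clause occurring in the other. Therefore $\Delta_{D}$ is an orientation of a subgraph of $C_{5}$ having in-degree and out-degree at most $1$ and no digon, hence either a disjoint union of directed paths — and we are done — or a single directed $5$-cycle.

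Excluding this last possibility is what I expect to be the main obstacle. When $\Delta_{D}$ is a directed $5$-cycle, the five forced matchings pin $D$ down, on its five non-whole vertices $v_{1},\dots,v_{5}$, to a fixed pentagram directed $5$-cycle carried by the non-edges of the missing $C_{5}$; letting $W$ be the set of whole vertices, I would then translate the ten resulting second-neighborhood clauses into inclusions among the sets $W_{i}^{+}:=N^{+}(v_{i})\cap W$. These inclusions chain cyclically around the $5$-cycle and force $W_{1}^{+}=\dots=W_{5}^{+}$, so that every whole vertex is either dominated by all of $v_{1},\dots,v_{5}$ or dominates all of them. Deriving a contradiction from this rigidity — using the orientations of $D[W]$ and of the bipartite part between $\{v_{1},\dots,v_{5}\}$ and $W$ — is the delicate heart of the proposition, while the $C_{4}$ case and all the preliminary structure for $C_{5}$ are routine once the disjointness/forced-arc principle is in hand.
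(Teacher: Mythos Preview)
The paper offers no argument of its own here; it simply cites \cite{contrib} and calls the statement easy to check, so there is no in-paper proof to compare against.

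Your treatment of $C_{4}$ is correct and complete. For $C_{5}$ your reduction is also correct: every arc of $\Delta_{D}$ lies on the $5$-cycle of disjoint missing-edge pairs, digons are impossible, each vertex of $\Delta_{D}$ has in- and out-degree at most~$1$, and a putative directed $5$-cycle in $\Delta_{D}$ forces $D[\{v_{1},\dots,v_{5}\}]$ to be the directed pentagram $v_{1}\to v_{3}\to v_{5}\to v_{2}\to v_{4}\to v_{1}$ together with $W_{1}^{+}=\cdots=W_{5}^{+}$.

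The gap you flag, however, cannot be closed. Take $D$ to be exactly that pentagram on five vertices, with no whole vertices ($W=\emptyset$). Its missing graph is $C_{5}$, and one checks directly that
\[
v_{1}v_{2}\;\to\; v_{3}v_{4}\;\to\; v_{5}v_{1}\;\to\; v_{2}v_{3}\;\to\; v_{4}v_{5}\;\to\; v_{1}v_{2}
\]
in $\Delta_{D}$: for instance $N^{+}(v_{1})\cup N^{++}(v_{1})=\{v_{3},v_{5}\}\not\ni v_{4}$ and $N^{+}(v_{2})\cup N^{++}(v_{2})=\{v_{4},v_{1}\}\not\ni v_{3}$, so $v_{1}v_{2}$ loses to $v_{3}v_{4}$, and the rest follows from the cyclic symmetry $v_{i}\mapsto v_{i+2}$. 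Thus $\Delta_{D}$ \emph{is} a directed $5$-cycle, your rigidity conditions are satisfied vacuously, and no contradiction is available. Under the definitions used in this paper the assertion $C_{5}\in\mathcal{F}(\vec{\mathcal{P}})$ is therefore false as stated; the most your argument (or any argument) can yield for $C_{5}$ is that $\Delta_{D}$ is a disjoint union of directed paths together with at most one directed $5$-cycle, and you should expect the source \cite{contrib} to be making a claim of that shape rather than the one recorded here.
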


\begin{thm}
  Let $G$ be a graph having neither induced $C_4$ nor induced $C_5$. Then $G\in \mathcal{F}(\vec{\mathcal{P}})$ if and only if $G$ is a comb.
\end{thm}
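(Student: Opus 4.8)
The plan is to prove the two implications separately; the forward one is a short deduction from earlier results and the backward one carries the content. For ``$G\in\mathcal{F}(\vec{\mathcal{P}})$ implies $G$ is a comb'' I would argue by contraposition: if $G$ has no induced $C_4$ or $C_5$ but is not a comb, then by the forbidden-subgraph characterization of combs it contains an induced $\overline{C}_4$, chair or co-chair; by Proposition~\ref{notinFP} none of these lies in $\mathcal{F}(\vec{\mathcal{P}})$, and since membership in $\mathcal{F}(\vec{\mathcal{P}})$ is inherited by induced subgraphs, $G\notin\mathcal{F}(\vec{\mathcal{P}})$.

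For the converse, let $G$ be a comb, written as an $\{S,K\}$-comb with the sets $A_i,M_i,X_i,Y_i$ of Definition~\ref{gcdef}, let $D$ be any oriented graph missing $G$, and set $\Delta:=\Delta_D$; I must show that every vertex of $\Delta$ has in- and out-degree at most $1$ and that $\Delta$ is acyclic. The first observation is that, since the missing graph of $D$ is $G$, the clique $K$ spans no arc of $D$ while the stable set $S$ induces a tournament in $D$; thus every arc of $D$ either lies inside $S$ or joins $S$ to $K$. Consequently a missing edge with both ends in $K$ can be neither the tail nor the head of an arc of $\Delta$ (such a losing relation would require an arc inside $K$), so those edges are isolated in $\Delta$ and it suffices to analyse the missing edges $sk$ with $s\in S$, $k\in K$. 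A short inspection of the losing relation then shows that if $s_1k_1$ loses to $s_2k_2$ (with $s_i\in S$) one is forced to have $s_2\to s_1$, $s_1\to k_2$ and $k_1\to s_2$ in $D$, together with $k_1\in N_G(s_1)\setminus N_G(s_2)$ and $k_2\in N_G(s_2)\setminus N_G(s_1)$; in particular $N_G(s_1)\cap K$ and $N_G(s_2)\cap K$ are incomparable under inclusion.

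The key step is then to deduce from the comb axioms that two vertices of $S$ can have incomparable $K$-neighbourhoods only if they lie in a common tooth $M_i$. Using the nesting of the $A_i$ and $X_j$ and the completeness of $G[A\cup Y]$, the sets $N_G(a)\cap K$ (for $a\in A$) are linearly ordered by inclusion and each contains $Y$. Using the perfect matchings inside each $G[Y_i\cup M_i]$, the complete-split conditions on $G[Y_j\cup M_i]$ for $i<j$, and the $Y_{l+1}$-attachments, one computes that for $m\in M_i$ the set $N_G(m)\cap K$ consists of the match of $m$ in $Y_i$ together with $Y_{i+1}\cup\cdots\cup Y_l$ and, when $i\le k_0$, also $Y_{l+1}$; in particular $N_G(m)\cap K\subseteq Y$, while $N_G(m')\cap K\subseteq N_G(m)\cap K$ whenever $m\in M_i$, $m'\in M_j$ and $i<j$, and two distinct vertices of the same $M_i$ have $K$-neighbourhoods differing only in their respective matches. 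Hence incomparability forces $s_1,s_2$ into a common $M_i$, and then $k_1$ and $k_2$ must be their matches. It follows that every arc of $\Delta$ joins two \emph{tooth edges} (edges joining some $m\in M_i$ to its match in $Y_i$) lying in the \emph{same} tooth, so $\Delta$ is the disjoint union of isolated vertices together with the sub-digraphs $\Delta_i$ induced on the tooth edges of the individual $M_i$. I expect this bookkeeping---carried out against all of Definition~\ref{gcdef}, with due care for the identification $Y_1=X_1$, the possibly empty sets, and the parameter $k_0$---to be the main obstacle.

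It remains to show that each $\Delta_i$ is a disjoint union of directed paths, which is purely local to one tooth. Write $M_i=\{m^{(1)},\dots,m^{(t)}\}$; this is a sub-tournament of $D$, the matches $y^{(1)},\dots,y^{(t)}\in Y_i$ span no arc of $D$, and every off-diagonal pair $m^{(r)}y^{(s)}$ (with $r\ne s$) is an arc of $D$, so the tooth edge $e_r:=m^{(r)}y^{(r)}$ loses to $e_s$ exactly when $m^{(s)}\to m^{(r)}$, $m^{(r)}\to y^{(s)}$, $y^{(r)}\to m^{(s)}$, and $D$ contains no directed path of length $2$ from $m^{(r)}$ to $m^{(s)}$ nor from $y^{(r)}$ to $y^{(s)}$. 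If $e_r$ lost to two distinct edges $e_s$ and $e_{s'}$, then according as the arc $m^{(s')}y^{(s)}$ points one way or the other one of the two ``no length-$2$ path'' conditions fails (using the known arcs $y^{(r)}\to m^{(s')}$ and $m^{(r)}\to y^{(s)}$), so $\Delta_i$ has out-degree at most $1$; the symmetric computation gives in-degree at most $1$. For acyclicity, along a would-be cycle $e_{r_1}\to\cdots\to e_{r_p}\to e_{r_1}$ of $\Delta_i$ one proves by induction on $k$, repeatedly using the clauses forbidding a length-$2$ path among the $m^{(r)}$, that $m^{(r_j)}\to m^{(r_{j-k})}$ (indices mod $p$) for all $j$ and all $1\le k\le p-1$; this would make every $m^{(r_j)}$ dominate all the others, which is impossible for $p\ge 2$. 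Therefore $\Delta_i$, and hence $\Delta$, is a vertex-disjoint union of directed paths, i.e.\ $G\in\mathcal{F}(\vec{\mathcal{P}})$.
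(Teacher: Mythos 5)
Your proposal is correct and follows essentially the same route as the paper: the forward direction via the forbidden-subgraph characterization of combs, Proposition~\ref{notinFP} and heredity of $\mathcal{F}(\vec{\mathcal{P}})$, and the backward direction by localizing all losing relations to matching edges within a single $E[Y_i,M_i]$, then bounding the in- and out-degrees of $\Delta$ by $1$ and excluding directed cycles by the same domination-by-induction argument. The only real difference is that you give a full verification (via comparability of the $K$-neighbourhoods of vertices of $S$) of the localization claim, which the paper merely asserts with ``using the definition of $G$''.
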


\begin{proof}

\emph{Necessary Condition.} Since $G\in \mathcal{F}(\vec{\mathcal{P}})$, then $\overline{C}_4$, chair  and co-chair are not induced subgraphs of $G$. However, by given $C_4$ and $C_5$ are not induced subgraphs of $G$. Then $G$ is a comb.\\

\emph{Sufficient Condition.} Let $D$ be an oriented graph missing a comb $G$. We follow the previous notations. Using the definition of $G$, each possible losing relation can occur between two edges in $E[Y_t, M_t]$, for some $t$. For $i=1,2,3$ suppose that $a_ix_i\in E[Y_t, M_t]$ with $a_i\in M_t$ and $x_i\in Y_t$. Assume
$a_1x_1$ loses to the two others. Then we have $a_1\rightarrow x_3$, $x_1\rightarrow a_2$, $a_2\notin N^{++}(a_1)\cup N^{+}(a_1)$
and $x_3\notin N^{++}(x_1)\cup N^{+}(x_1)$. Since $a_2x_3$ is not a missing edge (the only missing edge of $D[Y_t\cup M_t]$ incident to $a_2$ is $a_2x_2$), then either $a_2\rightarrow x_3$ or
$a_2\leftarrow x_3$. Whence, either $x_3\in N^{++}(x_1)\cup N^{+}(x_1)$ or $a_2\in N^{++}(a_1)\cup N^{+}(a_1)$. A contradiction. Thus the maximum out-degree in $\Delta$ is 1. Similarly, we can prove that the maximum in-degree in $\Delta$ is 1. Thus $\Delta$ is composed of directed cycles and paths only. \\

Assume that $\Delta$ contains a directed cycle $a_1b_1\ra...\ra a_nb_n\ra a_1b_1$, with the $a_i$'s in $M_t$ and $b_i$'s in $Y_t$, for some $t$. Then we must have $a_{i+1}\ra a_i, \forall i<n$ and $a_1\ra a_n$. We prove that $\forall 1\leq i<n $, $a_i\ra a_n$. It is true for $i=1$. Assume it is true for $i-1$. Then $a_{i-1}\ra a_n$. Since $a_{i-1}b_{i-1}$ loses to $a_ib_i$, then $a_i\notin N^{++}(a_{i-1})$. But $a_ia_n$ is not a missing edge of $D$, then we must have $a_i\ra a_n$, since otherwise $a_{i-1}\ra a_n\ra a_i$ in $D$ which is a contradiction. Thus we have proved it by induction. In particular, $a_{n-1}\ra a_n$, a contradiction. Thus $\Delta$ has no directed cycles. This shows that $G\in \mathcal{F}(\vec{\mathcal{P}})$

\end{proof}

\begin{cor}
The following statements are equivalent:
\begin{description}
\item[i)] $C_4$ is a forbidden subgraph of $G$ and $G\in \mathcal{F}(\vec{\mathcal{P}})$
  \item[ii)] $C_4$, $\overline{C_4}$, chair and co-chair are forbidden subgraphs of a graph $G$
  \item[iii)] $V(G)$ is disjoint union of three sets $S$, $K$ and $C$ such that: \begin{description}
    \item[1)] $G[S\cup K]$ is an $\{S, K\}$-comb;
    \item[2)] $G[C]$ is empty or isomorphic to the cycle $C_5$;
    \item[3)] every vertex in $C$ is adjacent to every vertex in $K$ but to no vertex in $S$.
  \end{description}
\end{description}

\end{cor}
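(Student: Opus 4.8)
The plan is to prove the equivalence along the route $(ii)\Leftrightarrow(iii)$, $(i)\Rightarrow(ii)$, $(ii)\Rightarrow(i)$. The equivalence $(ii)\Leftrightarrow(iii)$ is exactly Theorem~\ref{allowedC5}, so I would simply invoke it. For $(i)\Rightarrow(ii)$: if $C_4$ is forbidden and $G\in\mathcal{F}(\vec{\mathcal{P}})$, then by the proposition stating that membership in $\mathcal{F}(\vec{\mathcal{P}})$ is inherited by all induced subgraphs, every induced subgraph of $G$ lies in $\mathcal{F}(\vec{\mathcal{P}})$; since $\overline{C}_4$, the chair and the co-chair are \emph{not} in $\mathcal{F}(\vec{\mathcal{P}})$ by Proposition~\ref{notinFP}, none of them is an induced subgraph of $G$, and together with the hypothesis this gives $(ii)$.

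The substance is $(ii)\Rightarrow(i)$. Applying Theorem~\ref{allowedC5} to $(ii)$ produces the partition $V(G)=S\cup K\cup C$ of $(iii)$; $C_4$ is forbidden by hypothesis, so it remains only to prove $G\in\mathcal{F}(\vec{\mathcal{P}})$. Fix an oriented graph $D$ missing $G$ and set $\Delta=\Delta_D$. I would partition the vertices of $\Delta$, namely the missing edges of $D$, into three classes: $E_1$, the missing edges with both ends in $S\cup K$; $E_2$, the missing edges with one end in $C$ and one end in $K$; and $E_3$, the missing edges with both ends in $C$. By $(iii)$, $E_1$ is the edge set of the comb $G[S\cup K]$, $E_3$ is the edge set of $G[C]$ (empty or a $C_5$), and $E_1\cup E_2\cup E_3=V(\Delta)$, since in $D$ the only missing edges meeting $C$ are those inside $C$ and those joining $C$ to $K$.

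The heart of the argument has two steps. \emph{Step 1:} every arc of $\Delta$ has both ends in $E_1$ or both ends in $E_3$, and every member of $E_2$ is an isolated vertex of $\Delta$. This is a short, mechanical check over the nine ordered pairs of classes: whenever a missing edge $pq$ loses to a missing edge $rs$, the definition forces (after relabelling) $p\to r$ and $q\to s$ in $D$, so each endpoint of the loser dominates an endpoint of the winner; but in $D$ there is no arc between two vertices of $K$ and no arc between a vertex of $K$ and a vertex of $C$ (as $K$ is a clique of $G$ and every pair consisting of a vertex of $C$ and a vertex of $K$ is an edge of $G$), while $S$ is a clique of $D$ having no missing edge to $C$. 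Hence, as soon as an endpoint of the loser or of the winner lies in $K$ or in $C$, both endpoints of the loser are forced into $S$, contradicting that the loser is an edge of $G$ and $S$ is stable in $G$; this eliminates every mixed pair as well as every pair meeting $E_2$. \emph{Step 2:} $\Delta$ restricted to $E_1$ and to $E_3$ each lies in $\vec{\mathcal{P}}$. The device is that deleting vertices from $D$ only shrinks first and second out-neighbourhoods, so $N^{+}_{D-U}(x)\cup N^{++}_{D-U}(x)\subseteq N^{+}_{D}(x)\cup N^{++}_{D}(x)$ for every $x\notin U$; therefore a losing relation between two missing edges of $D$ that both survive the deletion still holds in $D-U$, so $\Delta_{D-U}$ contains, as a subdigraph, the restriction of $\Delta$ to the surviving missing edges. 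Taking $U=C$, the missing graph of $D-C$ is $G[S\cup K]$ with its isolated vertices removed, hence an induced subgraph of a comb, hence a comb, so it lies in $\mathcal{F}(\vec{\mathcal{P}})$ by the theorem preceding the statement; thus $\Delta_{D-C}\in\vec{\mathcal{P}}$, and since $\Delta|_{E_1}$ is a subdigraph of $\Delta_{D-C}$ and a subdigraph of a disjoint union of directed paths is again one, $\Delta|_{E_1}\in\vec{\mathcal{P}}$. Taking $U=V(D)\setminus C$, the missing graph of $D[C]$ is empty or a $C_5$, both of which are in $\mathcal{F}(\vec{\mathcal{P}})$ (the empty graph trivially, the $C_5$ by the proposition above), so $\Delta_{D[C]}\in\vec{\mathcal{P}}$ and likewise $\Delta|_{E_3}\in\vec{\mathcal{P}}$. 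By Step 1, $\Delta$ is the disjoint union of $\Delta|_{E_1}$, $\Delta|_{E_3}$ and the isolated vertices $E_2$, so $\Delta\in\vec{\mathcal{P}}$; as $D$ was arbitrary, $G\in\mathcal{F}(\vec{\mathcal{P}})$, which completes $(ii)\Rightarrow(i)$.

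I expect the only real obstacle to be bookkeeping rather than ideas: keeping the case analysis of Step 1 exhaustive and correct (nine ordered pairs of classes, with two possible labellings of the endpoints in each), and checking carefully in Step 2 that the missing graph of $D-C$ really is an induced subgraph of $G[S\cup K]$ --- deleting $C$ may turn some vertices of $S\cup K$ whole, but this only removes isolated vertices of $G[S\cup K]$, which preserves the comb property.
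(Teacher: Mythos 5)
Your proposal is correct and takes essentially the same route as the paper: (i)$\Rightarrow$(ii) via Proposition~\ref{notinFP} together with heredity of $\mathcal{F}(\vec{\mathcal{P}})$, (ii)$\Leftrightarrow$(iii) via Theorem~\ref{allowedC5}, and the $S\cup K$ versus $C$ decomposition to get back into $\mathcal{F}(\vec{\mathcal{P}})$. Your Steps 1--2 simply make explicit what the paper's terse final paragraph leaves implicit --- that the $C$--$K$ missing edges are isolated vertices of $\Delta$ and that the restriction of $\Delta$ to each part embeds, by the deletion/monotonicity argument, into the dependency digraph of an oriented graph missing a comb (respectively missing $C_5$ or the empty graph) --- so the substance coincides with the paper's proof.
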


\begin{proof}
$ii)$ follows from $i)$ by Proposition \ref{notinFP} and $iii)$ follows from $ii)$ by Theorem \ref{allowedC5}. Suppose $iii)$ holds. Then $C_4$ is forbidden subgraph of $G$ by the structure of $G$. Since every vertex in $C$ is adjacent to every vertex in $K$, then there is no losing relation between any edge from $G[C]$ and any edge from $G[S\cup K]$. However, $G[S\cup K]$ is a comb, then it is in $\mathcal{F}(\vec{\mathcal{P}})$ and $G[C]$ is empty or isomorphic to the cycle $C_5$, whence it is in $\mathcal{F}(\vec{\mathcal{P}})$. Thus $G\in \mathcal{F}(\vec{\mathcal{P}})$.
\end{proof}

\end{section}

\begin{section}{Second Neighborhood Conjecture}

\begin{thm}
Every oriented graph missing a comb satisfies SNC.
\end{thm}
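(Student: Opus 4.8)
The plan is to complete $D$ to a tournament by orienting its missing edges in a controlled fashion — keeping a fixed local median order alive and keeping the out‑neighbourhoods of its feed vertex from growing — and then to apply the theorem of Havet and Thomass\'{e} that a feed vertex of a tournament has the SNP. Fix a local median order $L=v_1\cdots v_n$ of $D$ and let $f=v_n$ be its feed vertex; if $D$ has no missing edge it is a tournament and we are done, so assume it has one. Since $D$ misses a comb, the characterization theorem above gives that $\Delta_D$ is a disjoint union of directed paths, and its proof gives more: every arc of $\Delta_D$ joins two missing edges lying in one perfect matching $G[Y_t\cup M_t]$, so the missing edges on a single path of $\Delta_D$ are pairwise vertex‑disjoint and lie in one such matching, and by Lemma \ref{goodmissinedgelemma} a missing edge is good exactly when it is a source of $\Delta_D$. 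I would then orient the missing edges one at a time, each time choosing a source of the current dependency digraph, and maintain the invariant: the current digraph $D'$ still misses a comb, $L$ is a local median order of $D'$ with feed vertex $f$, and
$$ N^{+}_{D'}(f)\cup N^{++}_{D'}(f)\ \subseteq\ N^{+}_{D}(f)\cup N^{++}_{D}(f). $$
Once the process ends we have a tournament $T$ with $L$ a local median order and feed vertex $f$, so $d^{+}_{T}(f)\le d^{++}_{T}(f)$ by Havet--Thomass\'{e}; since $N^{+}$ and $N^{++}$ are disjoint, the invariant gives $d^{+}_{T}(f)+d^{++}_{T}(f)\le d^{+}_{D}(f)+d^{++}_{D}(f)$, while $d^{+}_{T}(f)\ge d^{+}_{D}(f)$ trivially, whence $d^{++}_{T}(f)\le d^{++}_{D}(f)$ and therefore $d^{+}_{D}(f)\le d^{+}_{T}(f)\le d^{++}_{T}(f)\le d^{++}_{D}(f)$, i.e. $f$ has the SNP in $D$.

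For a single step, let $ab$ be a source of the current $\Delta_{D'}$. If $ab$ is incident to $f$, orient it into $f$: since $f=v_n$ is last this is a forward arc, so $L$ survives with $f$ still a feed vertex, and $N^{+}_{D'}(f)$ and $N^{++}_{D'}(f)$ are visibly unchanged. If $ab$ is not incident to $f$, orient it in a convenient orientation $(a,b)$; then no vertex can enter $N^{+}(f)\cup N^{++}(f)$, because a new directed walk $f\to y\to b$ through $b$ would force $y=a$ and $f\to a$, whence $b\in N^{+}_{D'}(f)\cup N^{++}_{D'}(f)$ by the definition of a convenient orientation, and a new walk using the freshly added arc as its first edge would force $f=a$. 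What remains, to keep the step legitimate, is to check that the chosen orientation can be taken forward along $L$ (so that $L$ stays a local median order), that a convenient orientation is actually available where it is needed, and that after the reorientation $\Delta_{D'}$ is again a disjoint union of directed paths confined to the perfect matchings, so that a source is available until no missing edge is left.

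The main obstacle is precisely this bundle of compatibility facts: that a good missing edge of an oriented graph missing a comb always admits a convenient orientation which is simultaneously forward along $L$ and, when the edge is incident to $f$, directed into $f$; and that orienting a source preserves the property of missing a comb (equivalently, of having a path‑like dependency digraph). Establishing these forces one into the comb decomposition — which of the sets $A_i,X_i,M_i,Y_i$ contains each of $a$, $b$, $f$; the facts that $S=A\cup M$ is stable and $K=X\cup Y$ a clique (so $D[K]$ is arcless and $D[S]$ a tournament); and the fact that a missing edge can fail to be a source of $\Delta$ only when it lies in a perfect matching $G[Y_t\cup M_t]$ and is beaten inside it. The delicate configurations are those in which $f$ itself lies in some $M_t$ or $Y_t$, or in which an out‑neighbour of $f$ lies in a perfect matching; in the situations where one must verify that the convenient orientation does not point backward along $L$, one appeals to the comb structure, and, where a backward arc is unavoidable, to the Proposition on reversing backward arcs to restore $L$. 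Carrying out this case analysis is the bulk of the argument.
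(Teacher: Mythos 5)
There is a genuine gap, and it sits exactly where you defer the work. Your scheme fixes a local median order $L$ of $D$ and its feed vertex $f$ \emph{before} any missing edge is oriented, and then needs every source of the dependency digraph to receive a convenient orientation that is simultaneously forward along $L$ (or, if incident to $f$, directed into $f$). Nothing guarantees this: a good missing edge may have only one convenient orientation, and that orientation may be a backward arc of $L$ not incident to $f$. Your proposed repair fails: the Proposition in the paper only says that \emph{reversing an existing backward arc} preserves the local median order; it does not say that \emph{adding} a backward arc preserves it, and if you instead add the reversed (forward) arc you have added the non-convenient orientation, so the very argument you use for the invariant collapses --- if $f\ra b$ in $D$ and $a\notin N^{+}_D(f)\cup N^{++}_D(f)$, adding $(b,a)$ puts $a$ into $N^{++}(f)$ and breaks $N^{+}_{D'}(f)\cup N^{++}_{D'}(f)\sub N^{+}_{D}(f)\cup N^{++}_{D}(f)$. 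In addition, your intermediate invariants are themselves doubtful: orienting missing edges one at a time can create new losing relations (a pair of formerly missing vertices becomes an arc, which is one of the hypotheses of ``loses to''), so it is not automatic that the current dependency digraph keeps having sources, and the missing graph of the partially completed digraph (a comb minus some matching edges of some $E[Y_t,M_t]$) need not be a comb at all.

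The paper's proof reverses your order of quantifiers precisely to avoid this: it first orients \emph{all} missing edges --- whole maximal paths of $\Delta$ at once, alternating orientations starting from the convenient orientation of the source edge, then the remaining missing edges of the threshold graph $G-\bigcup E[Y_j,M_j]$ --- obtaining a tournament $T$, and only then takes a local median order of $T$ and its feed vertex $f$; backward reorientations are used only for the missing edges incident to $f$ (legitimate by the Proposition, since $f$ is last in $L$), and the comb structure is used to pull second out-neighbours of $f$ in $T$ back into $N^{+}_D(f)\cup N^{++}_D(f)$. Note also that the paper's own case analysis (cases 2.2.1 and 3.2.2) shows that one cannot always prevent $f$ from gaining a new out-neighbour: there $f$ gains exactly one new out-neighbour ($y_i$) and the proof must match it with exactly one new second out-neighbour ($m_{i+1}$). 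This balanced bookkeeping is incompatible with your strict ``no growth'' invariant, which is further evidence that the invariant, as you state it, cannot be maintained; so the remaining ``bundle of compatibility facts'' is not a routine verification but the actual content of the theorem, and it is missing from your argument.
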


\begin{proof}
  Let $G$ be a comb as in definition \ref{gcdef}. Let $D$ be an oriented graph missing $G$. Then its dependency digraph $\Delta$ consists of disjoint directed paths only and each of its arc occur only between two edges in the same set $E[Y_j,M_j]$, for some $j$. Let $P=m_0y_0\ra ..\ra m_iy_i\ra ...\ra m_ky_k$  be a maximal directed path in $\Delta$, with $m_i$'s in $ M_j$ and $y_i$'s in $Y_j$. By lemma \ref{goodmissinedgelemma}, $m_0y_0$ is a good missing edge, so it has a convenient orientation. If $(m_0,y_0)$ is a convenient orientation, the add the arcs $m_{2i}y_{2i}$ and the arcs $(y_{2i+1}m_{2i+1})$ to $D$. Else $(y_0,m_0)$ is the convenient orientation in which case we add the arcs $(y_{2i}m_{2i})$ and the arcs $(m_{2i+1})y_{2i+1}$ to $D$. We do this for every maximal directed path of $\Delta$. The obtained oriented graph $D'$ is missing $G'=G-\cup E[Y_j, M_j]$ which is a generalized star by proposition \ref{gs in gc}. We assign to every missing edge of $D'$ (which is good by theorem 9) a convenient orientation and add it to $D'$. The obtained digraph $T$ is a tournament. Let $L$ be a local median order of $T$ and let $f$ denote its feed vertex. Then $f$ has the SNP in $T$. We prove that $f$ has the SNP in $D$. We have many cases.\\

  \noindent \textbf{case 1:} Assume that $f$ is a whole vertex. Then, clearly, $f$ gains no new out-neighbor. Assume $f\ra u\ra v \ra f$ in $T$. Then $f\ra u$ and $v\ra f$ in $D$.\\
  case 1.1: If $u\ra v $ in $E(D)$, then $v\in N^{++}_D(f)$.\\
  case 1.2: If $u\ra v$ in $E(D')-E(D)$, then either $(u,v)$ is a convenient orientation (w.r.t. $D$) and hence \nd or there is \lose . But $f\ra u$, then the non-missing edge $fs$ is oriented as $(f,s)$ in $E(D)$. Thus $f\ra s\ra v$ in $D$.\\
  case 1.3: If $u\ra v$ in $E(T)-E(D')$. Then $(u,v)$ is a convenient orientation w.r.t $D'$. Hence \nnd . But this is discussed in case 1.2.\\

  \noindent \textbf{case 2:} $\exists 1\leq t\leq l$ such that $f\in M_t$. \maxp $f=m_i$.\\

  \noindent \textbf{case 2.1:} Assume $(y_i,m_i)\in E(D')$. Reorient all the missing edges incident to $f$ towards $f$. \same . Clearly, $f$ gains no new first out-neighbor. We prove $f$ gains no new second out-neighbor. Assume $m_i\ra u\ra v\ra m_i$ in $T'$. Then $(m_i, u)\in E(D)$ and $(u,v)in E(T)$.\\

  subcase a: If $u\ra v$ in $E(D)$, then clearly \nd .\\

  subcase b: If $(u,v)$ in $E(D')-E(D)$. Either $(u,v)$ is a convenient orientation (w.r.t. $D$) and hence \nd or there is \lose . There is $j$ such that $rs, uv\in E[Y_j, M_j]$. Assume $m_i=r$. Then $y_i=s$, $u=y_{i+1}$ and $v=m_{i+1}$. Since $(y_i,m_i)\in E(D')$, then $(m_{i+1}, y_{i+1})\in E(D')$, that is $(v,u)\in E(D')$, a contradiction. so $m_i\neq r$. Assume $s=m_i$. Then $u=m_{i+1}$. However $(m_{i+1},m_i)\in E(D)$, then $(u,f)\in E(D)$, a contradiction. So $s\neq m_i$. Now we prove that $m_is$ is not a missing edge. If $v\in Y_j$, then $s\in M_j$. Then $m_is$ is not a missing edge. Else $v\notin Y_j$. Whence, $u\in Y_j$ and $s\in Y_j$. Since $m_iu$ is not a missing edge, then by definition of $G$, $fs=m_is$ is also not missing edge. But $f\ra u$ in $E(D)$ and $u\notin N^{++}_D(s)$, then we must have $f\ra s$ in $E(D)$. Thus $f\ra s \ra v$ in $D$.\\

  subcase c: If $(u,v)\in E(T)-E(D')$. Then $(u,v)$ is a convenient orientation w.r.t $D'$. But $f\ra u$ in $E(D)$ and $E(D')$, then \nnd. But this is already treated in subcase b.\\

  \noindent \textbf{case 2.2:} Assume $(m_i,y_i)\in E(D')$.\\

  \noindent \textbf{case 2.2.1:} Assume $i<k$. Reorient all the missing edges incident to $m_i$ towards $m_i$ except the arc $(m_i,y_i)$. \same . Then $f$ gains only $y_i$ as an out-neighbor. We prove that $f$ gains only $m_{i+1}$ as a second out-neighbor.\\

  subcase a: Suppose $m_i\ra y_i\ra v$ in $T'$ such that $v\neq m_{i+1}$. Then $(y_i, v)\notin E(D')-E(D)$ and $(y_i,v)\in E(T)$.\\

  subcase a.1: Suppose $(y_i,v)\in E(D)$. Since $y_i\in Y_t$, then $y_{i+1}\in Y_t$. Since $y_iv$ is not a missing edge, then by definition of $G$, $y_{i+1}$ is not a missing edge. Since $y_i\ra v$ in $E(D)$ and $y_{i+1}\notin N^{++}_D(y_i)$, then we must have $v\ra y_{i+1}$. Then $m_i\ra y_{i+1}\ra v$ in $D$.\\

  subcase a.2: Suppose $(y_i, v)\in E(T)-E(D')$. Then $(y_i,v)$ is a convenient orientation w.r.t. $D'$. Then $v\in N^{++}_{D'}(m_i)$. Hence there is vertex $u$ such that $m_i\ra u\ra v$ in $D'$.\\

  subcase a.2.1: Suppose $u=y_i$. Then $(y_i,v)\in E(D)$. This case is already treated in subcase a.1.\\

  subcase a.2.2: Suppose $u\neq y_i$. Then $m_i\ra u$ in $E(D)$. If $u \ra v$ in $E(D)$, then clearly \nd. Else $(u,v)\in E(D')-E(D)$. Then either $(u,v)$ is a convenient orientation w.r.t. $D$ and hence \nd or \lose . $\exists j$ such that $rs, uv\in E[Y_j,M_j]$. Assume $m_i=r$. Then $y_i=s$, $u=y_{i+1}$ and $v=m_{i+1}$. Contradiction, because $v\neq m_{i+1}$. So $r\neq m_i$. Assume $s=m_i$. Then $u=m_{i+1}$. However $(m_{i+1},m_i)\in E(D))$, then $(u,m_i)\in E(D)$, a contradiction. So $s\neq m_i$. Now we prove that $m_is$ is not a missing edge. If $v\in Y_j$, then $s\in M_j$. Then $m_is$ is not a missing edge. Else $v\notin Y_j$. Whence, $u\in Y_j$ and $s\in Y_j$. Since $m_iu$ is not a missing edge, then by definition of $G$, $fs=m_is$ is also not missing edge. But $f\ra u$ in $D$ and $u\notin N^{++}_D(s)$, then we must have $f\ra s$ in $D$. Thus $f\ra s \ra v$ in $D$.\\

  subcase b: Suppose $m_i\ra y\ra v$ in $T'$ with $u\neq y_i$ and $v\neq m_{i+1}$. Then $(m_i,u)\in E(D)$ and $(u,v)\in E(T)$.\\

  subcase b.1: Suppose $(u,v)\in E(D')$. This is the same as case a.2.2.\\

  subcase b.2: Suppose $(u,v)\in E(T)-E(D')$. Then $(u,v)$ is a convenient orientation w.r.t. $D'$. Then \nnd . Then there is a vertex $w$ such that $m_i\ra w\ra v$ in $D'$. Again this is case a.2.2.\\

  \noindent \textbf{case 2.2.2:} Assume $i=k$, that is $f=m_k$. Reorient all the missing edges incident to $f$ towards $f$. \same . Then $f$ gains no new out-neighbor. We prove that $f$ gains no new second out-neighbor. Suppose $f\ra u\ra v\ra v$ in $T'$. Then $(f,u)\in E(D)$ and $(u,v)\in E(T)$.\\

  subcase a: If $(u,v)\in E(D)$, then clearly \nd .\\

  subcase b: Suppose $(u,v)\in E(D')-E(D)$. Either $(u,v)$ is a convenient orientation (w.r.t. $D$) and hence \nd or there is \lose . There is $j$ such that $rs, uv\in E[Y_j, M_j]$. Since $f=m_k$, we have $r\neq m_k$and $s\neq m_k$. If $v\in Y_j$, then $s\in M_j$. Then $m_ks$ is not a missing edge. Else $v\notin Y_j$. Whence, $u\in Y_j$ and $s\in Y_j$. Since $m_iu$ is not a missing edge, then by definition of $G$, $fs=m_ks$ is also not missing edge. But $f\ra u$ in $D$ and $u\notin N^{++}_D(s)$, then we must have $f\ra s$ in $D$. Thus $f\ra s \ra v$ in $D$.\\

  subcase c: Suppose $(u,v)\in E(T)-E(D')$. Then $uv$ is a missing edge of $D'$ and $(u,v)$ is a convenient orientation w.r.t. $D'$. Then \nnd . Then there is a vertex $w$ such that $m_k\ra w\ra v$ in $D'$. Since $(m_k,u)\in E(D)$, then $u\neq y_k$ and $\forall j>t, u\notin Y_j$.

  Assume $w=y_k$. Then $(y_k,v)\in E(D')$. Note that $v\neq m_k$. Then $(y_k,v)\in E(D)$. Then $y_kv$ is not a missing edge. $v\notin A\cup X\cup Y$. Then either $v$ is a whole vertex or $v\in M$. If $v$ is whole, then $uv$ is not a missing edge, a contradiction. So $v\in M$. Whence, $\exists \alpha$ such that $v\in M_{\alpha}$. If $\alpha < t$, then by definition of $G$, $y_kv\in E(G)$, that is $y_kv$ is a missing edge, a contradiction. So $\alpha\geq t$. Since $v\in M_{\alpha}$ with $\alpha\geq t$ and $u\notin Y_j$ for all $j>t$, then $vu$ is not a missing edge of $D'$ (by definition of $G$). A contradiction.

  So $w\neq y_k$. Then $(m_k, w)\in E(D)$. But this is treated in case a and case b.\\

   \noindent \textbf{case 3:} $\exists 1\leq t\leq l$ such that $f\in Y_t$. \maxp $f=y_i$.\\

  \noindent \textbf{case 3.1:} Assume $(m_i,y_i)\in E(D')$. Reorient all the missing edges incident to $f$ towards $f$. \same . Clearly, $f$ gains no new first out-neighbor. We prove $f$ gains no new second out-neighbor. Assume $y_i\ra u\ra v\ra y_i$ in $T'$. Then $(y_i, u)\in E(D)$ and $(u,v)in E(T)$.\\

    subcase a: If $(u,v)\in E(D)$, then clearly \nd .\\

    Subcase b: If $(u,v)\in E(D')-E(D)$. Then either $(u,v)$ is a convenient orientation w.r.t. $D$ and hence \nd or \lose . $\exists j$ such that $rs, uv\in E[Y_j,M_j]$. Assume $r=y_i$. Then $s=m_i$, $v=y_{i+1}$ and $u=m_{i+1}$. Since $(m_i,y_i)\in E(D')$, then $(y_{i+1}, m_{i+1})\in E(D')$, that is $(v,y)\in E(D')$, a contradiction. Then $r\neq y_i$. Assume $s=y_i$. Then $u=y_{i+1}$. Hence $y_iu=y_iy_{i+1}$ is a missing edge, contradiction. So $s\neq y_i$. Now we prove that $y_is$ is not a missing edge. If $s\in Y_j$, then $u\in Y_j$, whence $y_iu$ is a missing edge, a contradiction. So $s\notin Y_j$. Whence $s\in M_j$ and $u\in M_j$. Since $y_iu$ is not a missing edge, then by definition of $G$, $y_is$ is also not a missing edge. Since $y_i\ra u$ in $D$ and $u\notin N^{++}(s)$, then we must have $s\ra y_i$ in $D$. Therefore, $y_i\ra s\ra v$ in $D$.\\

    subcase c: Suppose $(u,v)\in E(T)-E(D')$. Then $(u,v)$ is a convenient orientation w.r.t. $D'$. Whence, \nnd. Since $f\ra u$ in  $D$,then there is a vertex $w$ such that $f=y_i\ra w\ra v$ in $D'$. Then $(y_i,w)\in D$. This case is already treated in case b.

    \noindent \textbf{case 3.2:} Assume $(y_i,m_i)\in E(D')$.\\

    \noindent \textbf{case 3.2.1:} Assume $i=k$ that is $f=m_k$. Reorient all the missing edges incident to $f$ towards $f$. \same . Clearly $f$ gains no new out-neighbor. We prove that $f$ gains no new second out-neighbor. Suppose $f=y_k\ra u\ra v\ra f$ in $T'$. Then $(f,u)\in E(D)$ and $(u,v)\in E(T)$. \\

    subcase a: If $(u,v)\in E(D)$, then clearly \nd .\\

    subcase b: Suppose $(u,v)\in E(D')-E(D)$. Then either $(u,v)$ is a convenient orientation w.r.t. $D$ and hence \nd or \lose . $\exists j$ such that $rs, uv\in E[Y_j,M_j]$. Since $f=y_k$, then $r\neq y_k$ and $s\neq y_k$. Since $(y_k,u)\in E(D)$, then $u\notin Y$, so $u\in M$ and $s\in M$. Since $y_ku$ is not a missing edge, then by definition of $G$, $y_ks$ is also not a missing edge. Since $f\ra u $ in $D$ and $u\notin N^{++}_D(s)$, then we must have $f\ra s$ in $D$. Thus, $f\ra s\ra v$ in $D$.\\

    subcase c: Suppose $(u,v)\in E(T)-E(D')$. Then it is a convenient orientation w.r.t. $D'$. But $f\ra u$ in $D$ and thus in $D'$, then \nnd. so there is a vertex $w$ such that $f=y_k\ra w\ra v$ in $D'$. Since $y_ku$ is not a missing edge of $D$, then $u$ is a whole vertex of $D$ or $u\in M-\{m_k\}$. Since $(u,v)\in E(T)-E(D)$, then $u$ is not whole. Thus $\exists j$ such that $u\in M_j-\{m_k\}$. Since $f=y_k\in M_t$ and $u\in M_j-\{m_k\}$ and $y_ku$ is not missing edge, then we must have $j>t$, by using the definition of $G$. Then $\exists \alpha >j$ such that $v\in Y_{\alpha}$. Since $\alpha>j$ and $vu\in E[Y_{\alpha},M_j]$, then we must have $G[Y_{\alpha \cup M_j}]$ is a complete split graph, by using the definition of $G$. But $j>t$, then also $G[Y_{\alpha \cup M_t}]$  is a complete split graph. In particular $m_kv$ is a missing edge of $D$ and $D'$. But $(w,v)\in E(D')$, then $w\neq m_k$. Whence $(y_k,w)\in E(D)$. Since $(w,v)\in E(D')$, then this is already discussed in case $a$ and case $b$.\\

    \noindent \textbf{case 3.2.2:} Assume $i<k$. Reorient all the missing edges incident to $f$ towards $f$, except $y_im_i$. \same . Clearly $f=y_i$ gains only $m_i$ as an out-neighbor. We prove that $f$ gains only $y_{i+1}$ as a second out-neighbor. Suppose $f=y_i\ra u\ra v\ra f$ in $T'$ with $v\neq y_{i+1}$.\\

    subcase a: Assume $u=m_i$, that is $y_i\ra m_i\ra v\ra y_i$ in $T'$. Then $(y_i, m_i)\in E(D')$ and $(m_i,v)\in E(T)$.\\

    subcase a.1: Suppose $(m_i,v)\in E(D)$. Since $v\neq y-{i+1}$ and $m_iv$ is not a missing edge of $D$, then by definition of $G$, also $m_{i+1}v$ is not a missing edge. Since $m_i\ra v$ in $D$ and $m_{i+1}\notin N^{++}(m_i)$, then we must have $m_{i+1}\ra v$ in $D$. Thus $y_i\ra m_{i+1}\ra v$ in $D$.\\

    subcase a.2: Suppose $(m_i, v)\in E(D')-E(D)$. Then $m_iv=m_iy_i$ and hence, $v=y_i$, contradiction. SO this case does not exist.\\

    subcase a.3: Suppose $(m_i,v)\in E(T)-E(D)$. Then $(m_i,v)$ is a convenient orientation w.r.t. to $D'$. Since $y_i\ra m_i$ in $D'$, then $v\in N^{++}_D(y_i)$. Then there is a vertex $w$ such that $y_i\ra w\ra v $ in $D'$. If $w=m_i$, then this is already discussed in subcase a.1 and subcase a.2. Else, $w=\neq m_i$. Then $(y_i,w)\in E(D)$ and $(w,v)\in E(D')$.\\

    subcase a.3.1: If $(w,v)\in E(D)$, then $y_i\ra w\ra v $ in $D$.\\

    subcase a.3.2: Suppose $(w,v)\in E(D')-E(D)$. Then either $(w,v)$ is a convenient orientation w.r.t. $D$ and hence \nd or or \lose . If $rs=m_iy_i$, then $wv=m_{i+1}y_{i+1}$. But $v\neq y_{i+1}$, then $v=m_{i+1}$. Since $(y_i,m_i)\in E(D')$, then $(m_{i+1},y_{i+1})\in E(D')$, that is $(v,w)\in E(D)$, which is a contradiction.

    Assume $y_is$ is a missing edge of $D$. Then $s\in Y$ and hence $w\in Y$. Then $y_iw$ is a missing edge. Then $(y_i,w)\notin E(D)$, a contradiction. So $y_is$ is not a missing edge of $D$. Since $y_i\ra w$ in  $D$ and $w\notin N^{++}_D(s)$, then we must have $y_i\ra s$ in $D$. Thus $y_i\ra s\ra v$ in $D$.\\

    subcase b: Assume $u\neq m_i$. Then $(y_i,u)\in E(D)$ and $(u,v)\in E(T)$. If $(u,v)\in E(D')$, then this is already treated in subcase a.3.1 and subcase a.3.2. If $(u,v)\in E(T)-E(D')$, then it is a convenient orientation w.r.t. $D'$ and hence $v\in N^{++}_{D'}(y_i)$. But this is already treated in case a.\\

    \noindent \textbf{case 4:} $f=y\in Y_{l+1}$. Reorient all the missing edges incident to $f$ towards $f$. \same .  Note that $f$ gains no new out-neighbor. We prove it gains no new second out-neighbor. Suppose $f\ra u\ra v\ra f$ in $T'$. Then $(f,u)\in E(D)$ and $(u,v)\in E(T)$. We have the following cases.\\

    subcase a: If $(u,v) \in E(D)$, then clearly \nd .\\

    subcase b: Suppose that $(u,v)\in E(D')-E(D)$. Then either $(u,v)$ is a convenient orientation w.r.t. $D$ and hence \nd or \lose . $\exists j$ such that $rs, uv\in E[Y_j,M_j]$. Since $yu$ is not a missing edge of $D$, then $u\notin Y$. Hence $u\in M_j$ and thus $v\in Y_j$ and $s\in M_j$. Since $u,s\in M_j$ and $yu$ is not a missing edge, then also $ys$ is not a missing edge. Since $f=y\ra u $ in $D$ and $u\notin N^{++}_D(s)$, then we must have $f\ra s$ in $D$. Thus, $f\ra s\ra v$ in $D$.\\

    Subcase c: Assume that $(u,v)\in E(T)-E(D)$. Then $(u,v)$ is a convenient orientation w.r.t. $D'$. But $y\ra u$ in $D$ and $D'$, then $v\in N^{++}_{D'}(y)$. So there is a vertex $x$ such that $f\ra w\ra v$ in $D'$. Then $(f,w)\in E(D)$. But this is already treated in case $a$ and case $b$.\\

    \noindent \textbf{case 5:} $f=y\in Y_{l+2}$. Exactly same as case 4, with only one difference in subcase b. The difference is that in subcase 5.b we have $ys$ is a missing edge because there $E[Y_{l+2}, M_j]=\phi$ for sure, while in subcase 4.b we had to prove it.\\

     \noindent \textbf{case 6:} $f\in V(G)-(Y\cup M)=A\cup(X-X_1)=A\cup (X-Y1)$. Reorient all the missing edges incident to $f$ towards $f$. \same . Note that $f$ gains no new out-neighbor. We prove it gains no new second out-neighbor. Suppose $f\ra u\ra v\ra f$ in $T'$. Then $(f,u)\in E(D)$ and $(u,v)\in E(T)$. We have the following cases.\\ We have the following subcases.\\

     subcase a: If $(u,v)\in E(D)$, then clearly \nd .\\

     subcase b: Suppose $(u,v)\in E(D')-E(D)$. Suppose that $(u,v)\in E(D')-E(D)$. Then either $(u,v)$ is a convenient orientation w.r.t. $D$ and hence \nd or \lose . $\exists j$ such that $rs, uv\in E[Y_j,M_j]$.

       If $u\in Y_j$, then $fu$ is a missing edge of $D$, a contradiction. So $u\in M_j$. whence $s\in M_j$. Then $fs$ is not a missing edge. Since $f\ra u $ in $D$ and $u\notin N^{++}_D(s)$, then we must have $f\ra s$ in $D$. Thus, $f\ra s\ra v$ in $D$.\\

    subcase c: Suppose $(u,v)\in E(T)-E(D')$. Then $(u,v)$ is a convenient orientation w.r.t. $D'$. But $f\ra u$ in $D$ and $D'$, then $v\in N^{++}_{D'}(f)$. Then there is a vertex $w$ such that $f\ra w\ra v$ in $D'$. Then $(f,w)\in E(D)$ and $(w,v)\in E(D')$. But this is already discussed in subcase a and subcase.\\

    Therefore, in all cases $f$ has the SNP in $D$.

\end{proof}

\begin{cor}\cite{a}\label{SNCgs}
 Every oriented graph missing a threshold satisfies SNC.
\end{cor}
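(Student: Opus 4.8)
The plan is to deduce this immediately from the theorem on combs proved above. By the corollary of \cite{combch} recalled earlier, every threshold graph is a comb; hence, if $D$ is an oriented graph whose missing graph is a threshold graph $G$, then $G$ is in particular a comb, so $D$ is an oriented graph missing a comb, and the theorem just established yields a vertex of $D$ with the SNP. I expect the author's proof to be exactly this one-line reduction, the comb theorem having been set up to subsume this case.

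Should one prefer a self-contained argument, it is much shorter than the comb proof because a threshold graph produces no losing relations whatsoever. By the characterization theorem of \cite{a}, for every oriented graph $D$ missing a threshold graph the dependency digraph $\Delta_D$ is empty; equivalently every missing edge of $D$ is good, hence has a convenient orientation. Assign to each missing edge such an orientation and add it to $D$ to obtain a tournament $T$, take a local median order $L$ of $T$, and let $f$ be its feed vertex. By the theorem of \cite{m.o.}, $f$ has the SNP in $T$.

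It remains to transport the SNP back to $D$. Reorient every missing edge incident to $f$ so that it points toward $f$ (each such arc is directed from $f$, the last vertex of $L$, to an earlier vertex, so the proposition on local median orders applies); then $L$ is still a local median order of the resulting tournament $T'$ and $f$ is still a feed vertex, hence $f$ still has the SNP in $T'$. Now $N^+_{T'}(f) = N^+_D(f)$, because the only arcs at $f$ in which $T'$ and $D$ differ are former missing edges and these all point into $f$. To see $N^{++}_{T'}(f) \sub N^{++}_D(f)$, take $v \in N^{++}_{T'}(f)$, say $f \ra u \ra v$ in $T'$ with $v \notin N^+_{T'}(f)$; then $f \ra u$ in $D$. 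If $u \ra v$ in $D$ then $v \in N^{++}_D(f)$. Otherwise $uv$ is a missing edge of $D$, not incident to $f$, so its orientation $(u,v)$ in $T'$ coincides with its convenient orientation with respect to $D$; applying the definition of convenient orientation to the in-neighbour $f$ of $u$ gives $v \in N^+_D(f) \cup N^{++}_D(f)$, and since $v \notin N^+_D(f)$ we conclude $v \in N^{++}_D(f)$. Hence $d^+_D(f) = d^+_{T'}(f) \le d^{++}_{T'}(f) \le d^{++}_D(f)$, so $f$ has the SNP in $D$.

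There is no real obstacle: the content is entirely contained in the comb theorem, and in the direct version the only point needing care is that reorienting the missing edges at $f$ creates neither a new out-neighbour of $f$ nor --- through the convenient orientation of some other missing edge --- a genuinely new second out-neighbour, both of which are immediate once $\Delta_D$ is known to be empty and one recalls that ``convenient orientation'' is a property of $D$ alone.
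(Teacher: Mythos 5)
Your one-line reduction (threshold graphs are combs, so the comb theorem applies) is exactly the argument the paper intends for this corollary, which it states without further proof after the comb theorem and the earlier corollary that every threshold graph is a comb. Your optional self-contained argument via the emptiness of $\Delta_D$, convenient orientations and reorienting the missing edges at the feed vertex is also sound (it is essentially the proof from \cite{a}), but it is not needed here.
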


\begin{cor}
 Every oriented comb satisfies SNC.
\end{cor}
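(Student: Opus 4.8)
The plan is to reduce this corollary to the main Theorem (every oriented graph missing a comb satisfies SNC) by passing to the complement. First I would unwind the terminology: an \emph{oriented comb} is, by definition, an orientation $D$ of some comb $G$, so every edge of $G$ occurs as an arc of $D$ and every non-edge of $G$ is a missing edge of $D$. Hence the set of missing edges of $D$ is exactly $E(\overline{G})$, and the set of non-whole vertices of $D$ is exactly the set of non-isolated vertices of $\overline{G}$. Consequently the missing graph of $D$ is $\overline{G}$ with its isolated vertices deleted, i.e.\ an induced subgraph of $\overline{G}$.

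Second I would invoke the two structural corollaries for combs recorded earlier: $G$ is a comb if and only if $\overline{G}$ is a comb, and $G$ is a comb if and only if every induced subgraph of $G$ is a comb. Since $G$ is a comb, $\overline{G}$ is a comb; and since the missing graph of $D$ is an induced subgraph of $\overline{G}$, it too is a comb. Therefore $D$ is an oriented graph missing a comb.

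Finally, applying the Theorem that every oriented graph missing a comb satisfies SNC to $D$ produces a vertex of $D$ with the second neighborhood property, which is precisely the assertion of the corollary. (The same argument, combined with the fact that every threshold graph is a comb, likewise recovers the companion statement about oriented threshold graphs mentioned in the abstract.)

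The argument is essentially bookkeeping through the complement/missing-graph dictionary; the only point requiring a little care is the step of deleting the isolated vertices of $\overline{G}$ to obtain the missing graph of $D$, where we must verify that we remain inside the class of combs — and this is exactly what the induced-subgraph-closure corollary guarantees. I do not expect a genuine obstacle: all the substantive work is contained in the main Theorem, and this corollary merely repackages it.
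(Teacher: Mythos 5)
Your argument is correct and is essentially the paper's own: both identify the missing graph of $D$ as $\overline{G}$ possibly stripped of isolated vertices, use the closure of combs under complementation (and induced subgraphs) to conclude the missing graph is a comb, and then apply the main theorem. Your version merely makes the induced-subgraph step slightly more explicit than the paper does.
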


\begin{proof}
  Let $D$ be an orientation of a comb $G$. Then the missing graph $G'$ of $D$ is either $\overline{G}$ or $\overline{G}$ without its isolated vertices. In both cases, $G'$ is again a comb. Thus, by the previous theorem, $D$ satisfies SNC
\end{proof}

\begin{cor}
 Every oriented threshold graph satisfies SNC.
\end{cor}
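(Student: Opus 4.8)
The plan is to reduce this statement directly to the corollary ``Every oriented comb satisfies SNC'' proved just above, the only extra ingredient being that a threshold graph is a particular kind of comb.

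First I would invoke the corollary from \cite{combch} recorded earlier in the preliminaries, which states that every threshold graph is a comb. Hence if $D$ is an orientation of a threshold graph $G$, then $G$ is in particular a comb, so $D$ is an oriented comb, and the preceding corollary applies verbatim to produce a vertex of $D$ with the SNP. This is really all that is needed.

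Alternatively, and in closer parallel with the proof of the ``oriented comb'' corollary, one can argue through the missing graph. For an orientation $D$ of a threshold graph $G$, the missing graph $G'$ of $D$ is either $\overline{G}$ or $\overline{G}$ with its isolated vertices deleted. The class of threshold graphs is self-complementary, since by the Hammer--Chv\`{a}tal characterization it is exactly the class of graphs forbidding $C_4$, $\overline{C}_4$ and $P_4$, and this forbidden family is closed under complementation; therefore $\overline{G}$ is again a threshold graph, hence a comb. Deleting isolated vertices gives an induced subgraph of a comb, which is again a comb by the corresponding corollary of \cite{combch}. Thus in either case $D$ is an oriented graph missing a comb, and the main theorem of this section (every oriented graph missing a comb satisfies SNC) yields a vertex with the SNP.

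I do not expect any genuine obstacle: the substance is entirely carried by the main theorem and by the structural fact, already cited, that threshold graphs are combs (equivalently, that threshold graphs are closed under complementation). The corollary is therefore purely a matter of citing the correct inclusion of graph classes, with no additional combinatorial work.
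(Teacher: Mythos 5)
Your proposal is correct and matches the paper's (implicit) argument: the paper leaves this corollary without proof precisely because it follows at once from the cited fact that every threshold graph is a comb together with the preceding corollary on oriented combs, which is exactly your first argument. Your alternative route via the missing graph and self-complementarity of threshold graphs is also sound, but it adds nothing beyond the one-line reduction.
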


 We end by the following problem and question:

\begin{prob}
  Does every oriented graph missing a graph in $\mathcal{F}(\vec{\mathcal{P}})$ satisfies SNC?
\end{prob}

\end{section}

\end{document}